\newcommand{\N}{\mathbb{N}}
\newcommand{\R}{\mathbb{R}}
\newcommand{\T}{\mathbb{T}}
\newtheorem{theorem}{Theorem}
\newtheorem{proposition}[theorem]{Proposition}
\newtheorem{lemma}[theorem]{Lemma}
\newtheorem{definition}[theorem]{Definition}
\newtheorem{remark}{Remark}
\def\di{\displaystyle}
\def\di{\displaystyle}
\def\N{\mathbb{N}}
\def\R{\mathbb{R}}
\def\T{\mathbb{T}}
	\def\TK{\mathbb{T}^\kappa}
	\def\Tk{\mathbb{T}_\kappa}
	\def\TKk{\mathbb{T}^\kappa_\kappa}
\def\L{\mathcal{L}}
\def\DD{\Delta}
\def\NN{\nabla}
\def\Crd{C^0_{\mathrm{rd}}}
\def\Cdrd{C^{1,\DD}_{\mathrm{rd}}}
\def\Cdrdz{C^{1,\DD}_{\mathrm{rd},0}}
\def\Cld{C^0_{\mathrm{ld}}}
\def\Cnld{C^{1,\NN}_{\mathrm{ld}}}
\def\RS{\mathrm{RS}}
\def\LS{\mathrm{LS}}
\def\RD{\mathrm{RD}}
\def\LD{\mathrm{LD}}
\def\ss{\mathrm{S}}
\def\CDN{\Cdrd(\T)\times\Cnld(\T)}
\def\CDNI{C^{1,\DD\times\nabla}}
\def\CDNz{C^{1,\DD\times\nabla}_0}
\newcommand{\fonction}[5]{\begin{array}[t]{lrcl}#1 :&#2 &\longrightarrow &#3\\&#4& \longmapsto &#5 \end{array}}
\newcommand{\fonctionsansdef}[3]{#1 : #2 \longrightarrow #3}
\begin{document}

\title{Helmholtz theorem for Hamiltonian systems on time scales}
\author{Fr\'ed\'eric Pierret}
\address{SYRTE, Observatoire de Paris, 77 avenue Denfert-Rochereau, 75014 Paris, France}


\maketitle

\begin{abstract}
We derive the Helmholtz theorem for Hamiltonian systems defined on time scales in the context of nonshifted calculus of variations which encompass the discrete and continuous case. Precisely, we give a theorem characterizing first order equation on time scales, admitting a Hamiltonian formulation which is defined with non-shifted calculus of variation. Moreover, in the affirmative case, we give the associated Hamiltonian.
\end{abstract}


\section{Introduction}

A classical problem in Analysis is the well-known {\it Helmholtz's inverse problem of the calculus of variations}: find a necessary and sufficient condition under which a (system of) differential equation(s) can be written as an Euler--Lagrangeagrange or a Hamiltonian equation and, in the affirmative case, find all the possible Lagrangian or Hamiltonian formulations. This condition is usually called \textit{Helmholtz condition}. \\

The Lagrangian Helmholtz problem has been studied and solved by J. Douglas \cite{doug}, A. Mayer \cite{maye} and A. Hirsch \cite{hirs,hirs2}. The Hamiltonian Helmholtz problem has been studied and solved up to our knowledge by R. Santilli in his book \cite{santilli}.\\

Generalization of this problem in the discrete calculus of variations framework has been done in \cite{bourdin-cresson} and \cite{hydon-mansfield} in the discrete Lagrangian case. For the Hamiltonian case it has been done for the discrete calculus of variations in \cite{albu} using the framework of \cite{mars} and in \cite{cresson-pierret} using a discrete embedding procedure. \\

In this paper we generalize the Helmholtz theorem for Hamiltonian systems in the case of time scale calculus using the work of \cite{ctm} and \cite{bourdin} in the context of nonshifted calculus. We recover then the discrete case \cite{cresson-pierret} when the time scale is a set of discrete points and the classical case \cite{santilli} when the time scale is a continuous interval. \\

The paper is organized as follow : In section 2, we recall some results on time scale calculus. In section 3, we give the definition of time scale embedding. In section 4, we give a brief survey of the classical Helmholtz Hamiltonian problem. In section 5, we introduce the definition of a time scale Hamiltonian in the context of nonshifted calculus of variations and then we enunciate and prove the main result of this paper, the time scale Hamiltonian Helmholtz theorem. Finally, in section 6, we conclude and give some prospects.


\section{Reminder about the time scale calculus}
We refer to \cite{agar2,bohn,bohn3} and references therein for more details on time scale calculus. \\

We consider $\T$ which denotes a bounded time scale with $a = \min (\T)$, $b = \max (\T)$ and $\mathrm{card} (\T) \geq 3$.

\begin{definition}
The backward and forward jump operators $\rho, \sigma : \T \longrightarrow \T$ are respectively defined by:
\begin{equation*}
\forall t \in \T, \; \rho (t) = \sup \{ s \in \T, \; s < t \} \; \text{and} \; \sigma (t) = \inf \{ s \in \T, \; s > t \},
\end{equation*}
where we put $\sup \emptyset = a$ and $\inf \emptyset = b$.
\end{definition}

\begin{definition}
A point $t \in \T$ is said to be left-dense (resp. left-scattered, right-dense and right-scattered) if $\rho (t) = t$ (resp. $\rho (t) < t$, $\sigma (t) = t$ and $\sigma (t) > t$).
\end{definition}

Let $\LD$ (resp. $\LS$, $\RD$ and $\RS$) denote the set of all left-dense (resp. left-scattered, right-dense and right-scattered) points of $\T$.\\

\begin{definition}
The graininess (resp. backward graininess) function $\fonctionsansdef{\mu}{\T}{\R^+}$ (resp. $\fonctionsansdef{\nu}{\T}{\R^+}$) is defined by $\mu(t) = \sigma (t) -t$ (resp. $\nu(t) = t- \rho (t)$) for any $t \in \T$.
\end{definition}

We set $\TK = \T \backslash ]\rho(b),b]$, $\Tk = \T \backslash [a,\sigma(a)[$ and $\TKk = \TK \cap \Tk$.

\begin{remark}
Note that $\TKk \neq \emptyset$ since $\mathrm{card} (\T) \geq 3$. 
\end{remark}

Let us recall the usual definitions of $\DD$- and $\nabla$-differentiability.

\begin{definition}
A function $\fonctionsansdef{u}{\T}{\R^n}$, where $n \in \N^*$, is said to be $\DD$-differentiable at $t \in \TK$ (resp. $\nabla$-differentiable at $t \in \Tk$) if the following limit exists in $\R^n$:
\begin{equation}
\lim\limits_{\substack{s \to t \\ s \neq \sigma (t) }} \dfrac{u(\sigma(t))-u(s)}{\sigma(t) -s} \; \left( \text{resp.} \; \lim\limits_{\substack{s \to t \\ s \neq \rho (t) }} \dfrac{u(s)-u(\rho (t))}{s-\rho(t)} \right).
\end{equation}
In such a case, this limit is denoted by $u^\DD (t)$ (resp. $u^\nabla (t)$).
\end{definition}

Let us recall the following results on $\DD$-differentiability.

\begin{theorem}[{\cite[Theorem 1.16 p.5]{bohn}}]
\label{rappeldelta1}
Let $\fonctionsansdef{u}{\T}{\R^n}$ and $t \in \TK$. The following properties hold:
\begin{enumerate}
\item if $u$ is $\DD$-differentiable at $t$, then $u$ is continuous at $t$.
\item if $t \in \RS$ and if $u$ is continuous at $t$, then $u$ is $\DD$-differentiable at $t$ with:
\begin{equation}
u^\DD (t) = \dfrac{u(\sigma(t))-u(t)}{\mu(t)}.
\end{equation}
\item if $t \in \RD$, then $u$ is $\DD$-differentiable at $t$ if and only if the following limit exists in $\R^n$:
\begin{equation}
\lim\limits_{\substack{s \to t \\ s \neq t}} \dfrac{u(t)-u(s)}{t-s}.
\end{equation}
In such a case, this limit is equal to $u^\DD (t)$.
\end{enumerate}
\end{theorem}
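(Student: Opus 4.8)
The plan is to reduce all three assertions to the standard ``$\epsilon$--neighbourhood'' reformulation of $\DD$--differentiability: $u$ is $\DD$--differentiable at $t$ with $u^\DD(t)=w\in\R^n$ if and only if for every $\epsilon>0$ there is a neighbourhood $U$ of $t$ in $\T$, with $t$ itself belonging to $U$, such that
\begin{equation*}
\bigl| u(\sigma(t))-u(s)-w\,(\sigma(t)-s) \bigr| \le \epsilon\,|\sigma(t)-s| \qquad \text{for all } s\in U .
\end{equation*}
This is merely the limit appearing in the definition written out, and the point worth stressing is that $s=t$ is \emph{not} excluded; this is what gives the condition any content at scattered points. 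With this in hand, each of the three statements follows from elementary estimates, treating the right--dense and right--scattered cases separately.

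For assertion (ii), suppose $t\in\RS$, so $\mu(t)>0$, and $u$ is continuous at $t$; put $w=\bigl(u(\sigma(t))-u(t)\bigr)/\mu(t)$. Since $t$ is right--scattered there is no point of $\T$ strictly between $t$ and $\sigma(t)$, so choosing $U\subseteq (t-\mu(t),t+\mu(t))$ forces $s\le t$, hence $\sigma(t)-s\ge\mu(t)>0$, for every $s\in U$. A one--line computation using $u(\sigma(t))-u(t)=w\,\mu(t)$ yields $u(\sigma(t))-u(s)-w(\sigma(t)-s)=u(t)-u(s)-w(t-s)$, and the right--hand side is bounded by $|u(t)-u(s)|+|w|\,|t-s|$, which is $\le\epsilon\,\mu(t)\le\epsilon\,|\sigma(t)-s|$ once $s$ is close enough to $t$, by continuity of $u$. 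Thus the reformulated condition holds with this $w$, proving simultaneously the $\DD$--differentiability and the formula.

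For assertion (i), let $u$ be $\DD$--differentiable at $t$, $w=u^\DD(t)$. Writing the reformulated inequality once at a general $s\in U$ and once at $s=t$ and subtracting, one gets $\bigl|u(s)-u(t)-w(s-t)\bigr|\le\epsilon\bigl(\mu(t)+|\sigma(t)-s|\bigr)$; since $|\sigma(t)-s|\le\mu(t)+|t-s|$ stays bounded as $s\to t$, the right--hand side is $\le\epsilon\cdot(\mathrm{const})$, and combined with $|w|\,|t-s|\to0$ this forces $u(s)\to u(t)$. (If $t\in\RD$ then $\sigma(t)=t$ and the same inequality collapses to $|u(s)-u(t)|\le(\epsilon+|w|)|s-t|$, so this case is covered as well.) Finally, assertion (iii) is immediate: when $t\in\RD$ one has $\sigma(t)=t$, so the $\DD$--difference quotient $\bigl(u(\sigma(t))-u(s)\bigr)/(\sigma(t)-s)$ is literally $\bigl(u(t)-u(s)\bigr)/(t-s)$ and the constraint $s\ne\sigma(t)$ becomes $s\ne t$, so $\DD$--differentiability at $t$ means exactly that the stated ordinary limit exists, with that limit equal to $u^\DD(t)$.

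There is no real difficulty here; the two spots that require attention are, first, using the correct reading of the limit in the definition — that $s=t$ is admitted — since otherwise (i) would be false at points which are right--scattered and left--dense, and second, in (ii), shrinking $U$ enough that it meets no point $\ge\sigma(t)$, so that the denominators $\sigma(t)-s$ remain bounded away from $0$ and the continuity of $u$ can be passed to the difference quotient.
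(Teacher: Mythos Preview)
Your proof is correct. However, the paper does not actually prove this theorem: it is stated as a quotation from \cite[Theorem~1.16, p.~5]{bohn} and no argument is given in the paper itself. What you have written is essentially the standard proof found in that reference --- the $\epsilon$--neighbourhood reformulation, the case split on $\RD$ versus $\RS$, and in particular the observation that the value $s=t$ must be admitted in the defining limit so that the inequality carries content at right--scattered points. There is therefore nothing to compare against within the paper, and your argument matches the cited source.
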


\begin{proposition}[{\cite[Corollary 1.68 p.25]{bohn}}]
\label{rappeldelta2}
Let $\fonctionsansdef{u}{\T}{\R^n}$. Then, $u$ is $\DD$-differentiable on $\TK$ with $u^\DD = 0$ if and only if there exists $c \in \R^n$ such that $u(t) =c$ for every $t \in \T$.
\end{proposition}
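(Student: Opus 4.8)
The plan is to establish the two implications separately. The forward implication --- from constancy of $u$ to $u^\DD = 0$ --- will be immediate: if $u(t) = c$ for all $t \in \T$, then for every $t \in \TK$ and every $s \neq \sigma(t)$ the difference quotient $\bigl(u(\sigma(t)) - u(s)\bigr)/(\sigma(t) - s)$ is $0$, so the limit defining $u^\DD(t)$ exists and equals $0$.

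For the converse I would argue as follows. Assume $u^\DD \equiv 0$ on $\TK$; working componentwise reduces matters to a scalar function $f = u_i : \T \to \R$ with $f^\DD \equiv 0$. Fix $t_1 < t_2$ in $\T$ and $\epsilon > 0$; the goal is the estimate $|f(t_2) - f(t_1)| \le \epsilon(t_2 - t_1)$, which on letting $\epsilon \to 0$ forces $f(t_2) = f(t_1)$, and hence (as $t_1, t_2$ are arbitrary) shows $f$ is constant. Consider
\[
S = \bigl\{ t \in [t_1,t_2] \cap \T : |f(t) - f(t_1)| \le \epsilon(t - t_1) \bigr\}.
\]
It contains $t_1$; since $f$ is continuous (item 1 of Theorem \ref{rappeldelta1}), $S$ is a closed subset of the compact set $[t_1,t_2] \cap \T$, so $t^* := \sup S$ lies in $S$. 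The heart of the argument is to show $t^* = t_2$. Suppose not, so $t^* < t_2$; then $t^* < b$, hence $t^* \in \TK$ and $f^\DD(t^*) = 0$. If $t^*$ is right-scattered, item 2 of Theorem \ref{rappeldelta1} gives $f(\sigma(t^*)) = f(t^*)$, and since $\sigma(t^*) \le t_2$ (because $t_2 \in \T$ and $t_2 > t^*$) a short computation puts $\sigma(t^*) \in S$, contradicting maximality of $t^*$. If $t^*$ is right-dense, the definition of $f^\DD(t^*) = 0$ yields $\delta > 0$ with $|f(s) - f(t^*)| \le \epsilon|s - t^*|$ whenever $s \in \T$ and $|s - t^*| < \delta$; right-density produces such an $s$ with $t^* < s \le t_2$, and adding this to the estimate at $t^*$ gives $s \in S$, again contradicting maximality. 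Hence $t^* = t_2 \in S$, which is exactly the desired inequality.

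The main obstacle is the right-dense case. There the handy ``scattered'' formula $u^\DD(t) = \bigl(u(\sigma(t)) - u(t)\bigr)/\mu(t)$ is not available, so one must squeeze a genuine one-sided Lipschitz-type bound out of the differentiability limit, and one must check that the auxiliary point $s$ it supplies stays inside both $[t_1,t_2]$ and $\TK$. The other ingredients --- closedness and compactness of $S$, the membership checks, and the componentwise reduction --- are routine.
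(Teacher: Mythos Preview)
The paper does not supply a proof of this proposition; it is simply quoted as \cite[Corollary~1.68]{bohn} without argument. Your proof is correct and is essentially the standard ``induction principle on time scales'' argument that underlies the mean-value estimate in that reference (Theorem~1.67 there), from which the corollary follows at once. One very minor slip: in your closing paragraph you say the auxiliary point $s$ in the right-dense case must stay inside $\TK$; in fact it only needs to lie in $[t_1,t_2]\cap\T$, which is exactly what your argument actually uses and verifies.
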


The analogous results for $\nabla$-differentiability are also valid.

\begin{definition}
A function $u$ is said to be rd-continuous (resp. ld-continuous) on $\T$ if it is continuous at every $t \in \RD$ (resp. $t \in \LD$) and if it admits a left-sided (resp. righ-sided) limit at every $t \in \LD$ (resp. $t \in \RD$).
\end{definition}

We respectively denote by $\Crd(\T)$ and $\Cdrd(\T)$ the functional spaces of rd-continuous functions on $\T$ and of $\DD$-differentiable functions on $\TK$ with rd-continuous $\DD$-derivative. We also respectively denote by $\Cld(\T)$ and $\Cnld(\T)$ the functional spaces of ld-continuous functions on $\T$ and of $\nabla$-differentiable functions on $\Tk$ with ld-continuous $\nabla$-derivative. \\

Let us recall the following results on the continuity of the forward and backward jump.

\begin{proposition}[{\cite[Lemma 1 p.548]{bourdin}}]
\label{propcontinuity}
Let $t \in \Tk$. The following properties are equivalent:
\begin{enumerate}
\item $\sigma$ is continuous at $t$;
\item $\sigma \circ \rho (t) = t$;
\item $t \notin \RS \cap \LD$.
\end{enumerate}

Also, let $t \in \TK$. if the following properties are equivalent:
\begin{enumerate}
\item $\rho$ is continuous at $t$;
\item $\rho \circ \sigma (t) = t$;
\item $t \notin \LS \cap \RD$.
\end{enumerate}
\end{proposition}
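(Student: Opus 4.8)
The plan is to establish the equivalences for the forward jump $\sigma$ (i.e. for $t\in\Tk$) by a short case analysis on whether $t$ is left-dense or left-scattered and whether $t$ is right-dense or right-scattered, and then to deduce the statement for $\rho$ (for $t\in\TK$) by the mirror-image argument — formally, by applying the $\sigma$-statement to the reflected time scale $-\T:=\{-s:s\in\T\}$, under which $\sigma\leftrightarrow\rho$ and $\Tk\leftrightarrow\TK$. So it is enough to prove $(1)\Leftrightarrow(3)$ and $(2)\Leftrightarrow(3)$ for $\sigma$.

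\emph{Step 1: $(2)\Leftrightarrow(3)$.} If $t\in\LS$, then $\rho(t)\in\T$, $\rho(t)<t$, and no point of $\T$ lies strictly between $\rho(t)$ and $t$; hence $\sigma(\rho(t))=\inf\{s\in\T:s>\rho(t)\}=t$, so (2) holds, and (3) holds as well since $t\notin\LD$. If $t\in\LD$, then $\rho(t)=t$, so $\sigma(\rho(t))=\sigma(t)$, and this equals $t$ precisely when $t\in\RD$; combined with $t\in\LD$ this is exactly the statement $t\notin\RS\cap\LD$. In both cases (2) and (3) are equivalent.

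\emph{Step 2: $(3)\Rightarrow(1)$.} Under (3), $t$ is right-dense or left-scattered. If $t\in\LS\cap\RS$, then $t$ is isolated in $\T$ and $\sigma$ (like any function) is continuous at $t$. The single substantive estimate concerns right-dense $t$: if $t\in\RD$ and $t<b$, then for each $\varepsilon>0$ right-density provides $r\in\T$ with $t<r<t+\varepsilon$, whence every $s\in\T$ with $t\le s<r$ satisfies $t\le s\le\sigma(s)\le r<t+\varepsilon$ (the middle inequality because $r\in\T$ and $r>s$); since $\sigma(t)=t$, this yields right-continuity of $\sigma$ at $t$, and for $t=b$ there is no right-hand side to examine. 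Similarly, for any $s\in\T$ with $s<t$ one has $s\le\sigma(s)\le t$ (because $t\in\T$, $t>s$), which gives left-continuity when $t\in\RD$. Thus: if $t\in\RD$, both one-sided limits of $\sigma$ at $t$ equal $t=\sigma(t)$; if $t\in\LS$ (and $t\in\RD$) the left side is vacuous. In all sub-cases $\sigma$ is continuous at $t$.

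\emph{Step 3: $\neg(3)\Rightarrow\neg(1)$, and conclusion.} Suppose $t\in\RS\cap\LD$. Since $t\in\Tk$ and $t\in\RS$, we have $t\neq a$, so left-density of $t$ provides $s_n\in\T$ with $s_n<t$ and $s_n\to t$; for each of them $s_n\le\sigma(s_n)\le t$ (because $t\in\T$, $t>s_n$), hence $\sigma(s_n)\to t<\sigma(t)$ and $\sigma$ is discontinuous at $t$. Steps 1--3 give $(1)\Leftrightarrow(2)\Leftrightarrow(3)$ for $\sigma$, and the $\rho$-part follows by reflection. Here is where the hypothesis $t\in\Tk$ is genuinely used and unavoidable: if $a$ is right-scattered, then $t=a$ has $\rho(a)=\sup\emptyset=a$, so $a\in\RS\cap\LD$ and (3) fails, yet $a$ is isolated in $\T$, so (1) holds — the equivalence would break, and it is precisely this point that $\Tk$ removes. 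I expect no serious obstacle: the proof is bookkeeping over the four dense/scattered configurations of $t$, the only analytic ingredient — and the point most prone to slips — being the elementary squeeze $s\le\sigma(s)\le\min\{t,r\}$ read straight off the definition of $\sigma$ as an infimum.
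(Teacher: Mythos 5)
Your proof is correct and complete. Note, however, that the paper itself does not prove this proposition: it is imported verbatim as a citation of Bourdin's Lemma 1, so there is no in-paper argument to compare against. Your self-contained treatment is the natural one: the equivalence $(2)\Leftrightarrow(3)$ by splitting on $t\in\LS$ versus $t\in\LD$, the squeeze $s\le\sigma(s)\le\min\{t,r\}$ for continuity at right-dense points, the triviality of continuity at isolated points, and the left-limit computation $\sigma(s)\to t<\sigma(t)$ to break continuity at $t\in\RS\cap\LD$ are exactly the ingredients of the cited lemma. Two points of your write-up deserve explicit credit: the observation that the hypothesis $t\in\Tk$ is what rules out the genuine counterexample $t=a$ with $a$ right-scattered (where $a\in\RS\cap\LD$ by the convention $\rho(a)=a$ yet $\sigma$ is trivially continuous at the isolated point $a$), and the deduction of the $\rho$-statement on $\TK$ from the $\sigma$-statement on $\Tk$ via the reflection $\T\mapsto -\T$, which halves the work and is legitimate since reflection exchanges $\sigma\leftrightarrow\rho$, $\RS\leftrightarrow\LS$, $\RD\leftrightarrow\LD$ and $\Tk\leftrightarrow\TK$.
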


\begin{proposition}[{\cite[Corollary 1 p.552]{bourdin}}]
\label{propbourdin}
Let $u : \T \longrightarrow \R^n$. If the following properties are satisfied:
\begin{itemize}
\item $\sigma$ is $\nabla$-differentiable on $\Tk$;
\item $u$ is $\DD$-differentiable on $\TK$;
\end{itemize}
then, $u\circ\sigma$ is $\nabla$-differentiable at every $t \in \TKk$ with

\begin{equation}
(u\circ\sigma)^\nabla (t) = \sigma^\nabla (t) u^\DD (t). \label{bourdinform1}
\end{equation}

Also, if the following properties are satisfied:
\begin{itemize}
\item $\rho$ is $\DD$-differentiable on $\TK$;
\item $u$ is $\nabla$-differentiable on $\Tk$;
\end{itemize}
then, $u\circ\rho$ is $\DD$-differentiable at every $t \in \TKk$ with

\begin{equation}
(u\circ\rho)^\DD (t) = \rho^\DD(t) u^\nabla (t). \label{bourdinform2}
\end{equation}

\end{proposition}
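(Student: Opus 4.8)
The plan is to fix $t \in \TKk$ and prove \eqref{bourdinform1} by distinguishing whether $t$ is left-scattered or left-dense; formula \eqref{bourdinform2} then follows from the mirror-image argument, in which one exchanges $\sigma$ with $\rho$, $\DD$ with $\nabla$ and ``left'' with ``right'', using the second halves of Proposition \ref{propcontinuity} and the $\nabla$/$\DD$ analogues of Theorem \ref{rappeldelta1}. The crucial preliminary remark is that, since $\sigma$ is $\nabla$-differentiable at $t \in \Tk$, it is in particular continuous at $t$; Proposition \ref{propcontinuity} then gives both $t \notin \RS \cap \LD$ and $\sigma(\rho(t)) = t$, and it is this fact that excludes the configuration $t \in \RS \cap \LD$ which would otherwise obstruct the formula.

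Suppose first that $t$ is left-scattered. Then $u\circ\sigma$ is continuous at $t$, being the composition of $\sigma$ (continuous at $t$) with $u$ (continuous at $\sigma(t)$ by the first assertion of Theorem \ref{rappeldelta1}), so the $\nabla$-analogue of the second assertion of Theorem \ref{rappeldelta1} gives $(u\circ\sigma)^\nabla(t) = \bigl(u(\sigma(t)) - u(\sigma(\rho(t)))\bigr)/\nu(t) = \bigl(u(\sigma(t)) - u(t)\bigr)/\nu(t)$ after inserting $\sigma(\rho(t)) = t$. The same computation yields $\sigma^\nabla(t) = \mu(t)/\nu(t)$, so it only remains to compare with $u^\DD(t)$: when $t$ is moreover right-scattered this equals $\bigl(u(\sigma(t)) - u(t)\bigr)/\mu(t)$ and the product $\sigma^\nabla(t)\,u^\DD(t)$ reproduces $(u\circ\sigma)^\nabla(t)$ in one line, whereas when $t$ is right-dense one has $\mu(t) = 0 = \sigma^\nabla(t)$ and $\sigma(t) = t$, so that both sides vanish.

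Suppose now that $t$ is left-dense, so that $\rho(t) = t$; since $t \notin \RS \cap \LD$ we also get $t \in \RD$, hence $\sigma(t) = t$. Here $\nabla$-differentiability of $u\circ\sigma$ at $t$ is not granted a priori, so I would argue directly from the limit characterization (the $\nabla$-analogue of the third assertion of Theorem \ref{rappeldelta1}). The pivotal point is that $\sigma(s) \neq \sigma(t)$ for every $s \in \T$ with $s \neq t$ --- for $s > t$ because $\sigma(s) \geq s > t$, and for $s < t$ because left-density of $t$ forces a point of $\T$ to lie strictly between $s$ and $t$ --- which legitimizes the factorization $\bigl((u\circ\sigma)(s) - (u\circ\sigma)(t)\bigr)/(s-t) = \bigl(u(\sigma(s)) - u(\sigma(t))\bigr)/\bigl(\sigma(s) - \sigma(t)\bigr) \cdot \bigl(\sigma(s) - \sigma(t)\bigr)/(s-t)$. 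As $s \to t$ the second factor tends to $\sigma^\nabla(t)$ by definition, and the first tends to $u^\DD(t)$: continuity of $\sigma$ at $t$ gives $\sigma(s) \to \sigma(t) = t$ with $\sigma(s) \neq t$, and since $t$ is right-dense the third assertion of Theorem \ref{rappeldelta1} identifies $u^\DD(t)$ with exactly that limit. Hence the product converges to $\sigma^\nabla(t)\,u^\DD(t)$, which is \eqref{bourdinform1}.

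The step I expect to be the main obstacle is this left-dense case. In the left-scattered case differentiability of the composition is automatic and everything reduces to bookkeeping with the graininess functions, whereas here one must genuinely establish that the quotient defining $(u\circ\sigma)^\nabla(t)$ converges, and the factorization above is only valid once one has checked $\sigma(s) \neq \sigma(t)$ and has the continuity of $\sigma$ at $t$ at one's disposal --- both of which rest, in the end, on the hypothesis that $\sigma$ be $\nabla$-differentiable at $t$.
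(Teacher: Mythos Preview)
The paper does not give its own proof of this proposition: it is quoted as \cite[Corollary~1 p.552]{bourdin} and stated without argument, so there is nothing in the paper to compare your attempt against. Your proof is correct and follows what is essentially the natural route --- split according to whether $t$ is left-scattered or left-dense, and in the dense case factor the difference quotient through $\sigma$. The key preliminary observation you isolate, namely that $\nabla$-differentiability of $\sigma$ forces continuity of $\sigma$ at $t$ and hence (via Proposition~\ref{propcontinuity}) both $\sigma(\rho(t))=t$ and $t\notin\RS\cap\LD$, is exactly what makes the two cases go through: it supplies the identity $\sigma(\rho(t))=t$ needed in the left-scattered computation, and it forces $t\in\RD$ in the left-dense case so that Theorem~\ref{rappeldelta1}(3) applies to $u^\DD(t)$. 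Your check that $\sigma(s)\neq\sigma(t)$ for $s\neq t$ in the left-dense case is the right way to legitimize the factorization.

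One small imprecision worth tightening: in the left-scattered case you assert that $u$ is continuous at $\sigma(t)$ by Theorem~\ref{rappeldelta1}(1), but that theorem only yields continuity on $\TK$, and $\sigma(t)$ may equal $b\notin\TK$ (namely when $b$ is left-scattered and $t=\rho(b)$). This does not damage the argument, since in that sub-case $t$ is both left- and right-scattered, hence isolated in $\T$, and continuity of $u\circ\sigma$ at an isolated point is automatic; but the justification as written does not literally cover it.
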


Let us now derive a result about $\DD$-differentiability of $\rho$ and the $\NN$-differentiability of $\sigma$.

\begin{proposition}\label{propinv}
If $\sigma$ is $\NN$-differentiable on $\Tk$ and $\rho$ is $\DD$-differentiable on $\TK$, then we have $\rho^\DD(t) \sigma^\NN(t)=1$ for all $t\in \TKk$.
\end{proposition}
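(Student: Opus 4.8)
The plan is to realize the product $\rho^\Delta(t)\,\sigma^\nabla(t)$ as the $\Delta$-derivative of the composition $\sigma\circ\rho$ by means of the chain-rule formula \eqref{bourdinform2}, and then to show that this derivative equals $1$. First I would apply Proposition \ref{propbourdin}: since $\rho$ is $\Delta$-differentiable on $\TK$ and $\sigma$ is $\nabla$-differentiable on $\Tk$, formula \eqref{bourdinform2} taken with $u=\sigma$ gives that $\sigma\circ\rho$ is $\Delta$-differentiable at every $t\in\TKk$, with $(\sigma\circ\rho)^\Delta(t)=\rho^\Delta(t)\,\sigma^\nabla(t)$. It then suffices to prove that $(\sigma\circ\rho)^\Delta(t)=1$ for all $t\in\TKk$.

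To this end I would first identify the map $\sigma\circ\rho$. Being $\nabla$-differentiable on $\Tk$, the function $\sigma$ is in particular continuous at every point of $\Tk$ (the $\nabla$-analogue of part (1) of Theorem \ref{rappeldelta1}); by the equivalence in Proposition \ref{propcontinuity}, continuity of $\sigma$ at $t$ is the same as $\sigma(\rho(t))=t$. Hence $\sigma\circ\rho$ coincides with the identity of $\T$ on all of $\Tk=\T\setminus[a,\sigma(a))=\T\setminus\{a\}$ (there is no point of $\T$ strictly between $a$ and $\sigma(a)$), the two maps possibly differing only at the single point $a$, where $(\sigma\circ\rho)(a)=\sigma(a)$.

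Finally, I would exploit that $\Delta$-differentiation is a local operation. Fix $t\in\TKk\subseteq\Tk$; then $t\ne a$, so $t\ge\sigma(a)$, and likewise $\sigma(t)\ge t\ge\sigma(a)>a$, so both $t$ and $\sigma(t)$ stay at distance at least $\sigma(a)-a>0$ from $a$. Consequently, in the limit defining $(\sigma\circ\rho)^\Delta(t)$ one may restrict to points $s$ avoiding $a$, and there $\sigma\circ\rho$ agrees with the identity; every difference quotient is therefore equal to $1$, whence $(\sigma\circ\rho)^\Delta(t)=1$. Combined with the first step this gives $\rho^\Delta(t)\,\sigma^\nabla(t)=1$ for every $t\in\TKk$. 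The only delicate point is this last one — checking that the unique point $a$ at which $\sigma\circ\rho$ fails to be the identity cannot affect the derivative at points of $\TKk$; the rest is a direct reading of Propositions \ref{propcontinuity} and \ref{propbourdin}.
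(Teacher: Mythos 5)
Your proof is correct, but it takes a genuinely different route from the paper's. The paper argues pointwise: it uses Proposition \ref{propcontinuity} to rule out the mixed cases $\RS\cap\LD$ and $\LS\cap\RD$ (since $\nabla$-differentiability of $\sigma$ and $\DD$-differentiability of $\rho$ force both maps to be continuous), and then computes the two derivatives explicitly in the two remaining cases --- $\sigma^\NN(t)=\mu(t)/\nu(t)$ and $\rho^\DD(t)=\nu(t)/\mu(t)$ when $t\in\LS\cap\RS$, both equal to $1$ when $t\in\LD\cap\RD$ --- so the product is $1$ by inspection. You instead recognize the product as $(\sigma\circ\rho)^\DD(t)$ via the chain rule \eqref{bourdinform2} and reduce everything to the observation that $\sigma\circ\rho$ is the identity away from $a$. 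Your argument is more conceptual (the identity $\rho^\DD\sigma^\NN=1$ becomes ``the chain rule applied to $\sigma\circ\rho=\mathrm{id}$'') and avoids the case analysis, at the price of importing Proposition \ref{propbourdin} and of the locality argument near $a$; the paper's computation is self-contained and, as a by-product, records the explicit values of the two derivatives at scattered points. One small imprecision to repair: $\Tk=\T\setminus\{a\}$ only when $a$ is right-scattered; if $a$ is right-dense then $[a,\sigma(a))\cap\T=\{\,\}$, so $\Tk=\T$ and $a$ may belong to $\TKk$, in which case your step ``$t\neq a$, so $t\geq\sigma(a)$'' does not apply. But in that case $\sigma(a)=a$, so $\sigma\circ\rho$ is the identity on all of $\T$ and the conclusion at $t=a$ is immediate; the argument survives, it just needs this extra sentence.
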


\begin{proof}
If $\sigma$ is $\NN$-differentiable on $\Tk$ and $\rho$ is $\DD$-differentiable on $\TK$, then the following limits then exist and are finite

\begin{equation*}
\lim\limits_{\substack{s \to t \\ s \neq \sigma (t) }} \dfrac{\rho(\sigma(t))-\rho(s)}{\sigma(t) -s} \quad \text{and} \quad  \lim\limits_{\substack{s \to t \\ s \neq \rho (t) }} \dfrac{\sigma(s)-\sigma(\rho (t))}{s-\rho(t)}.
\end{equation*}
and also $\sigma$ is continuous on $\Tk$ and $\rho$ is continuous on $\TK$ (Proposition \eqref{rappeldelta1}). By the continuity of $\sigma$ and $\rho$ (Proposition \eqref{propcontinuity}) we have only only two cases to consider, $t\in \LS\cap\RS$ or $t\in \LD\cap\RD$. If $t\in \LS\cap\RS$, then $\sigma^\NN(t)=\frac{\mu(t)}{\nu(t)}$ and $\rho^\DD(t)=\frac{\nu(t)}{\mu(t)}$ and then we have $\rho^\DD(t) \sigma^\NN(t)=1$. If $t\in \LD\cap\RD$, then $\sigma^\NN(t)=1$ and $\rho^\DD(t)=1$ and then we have also the result.

\end{proof}

Let us denote by $\int \DD \tau$ the Cauchy $\DD$-integral defined in \cite[p.26]{bohn} with the following result.

\begin{theorem}[{\cite[Theorem 1.74 p.27]{bohn}}]
For every $u \in \Crd(\TK)$, there exist a unique $\DD$-antiderivative of $u$ in sense of $U^\DD = u$ on $\TK$ vanishing at $t=a$. In this case the $\DD$-integral is defined by
\begin{equation*}
U(t) = \int_a^t u(\tau) \DD \tau
\end{equation*}
for every $t \in \T$.
\end{theorem}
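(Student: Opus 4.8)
The plan is to prove uniqueness immediately from Proposition \ref{rappeldelta2}, and to obtain existence by an explicit construction that reduces the $\DD$-integral to an ordinary Riemann integral on the real interval $[a,b]$.

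Uniqueness is the easy half. If $U_1$ and $U_2$ both satisfy $U_i^\DD=u$ on $\TK$ and $U_i(a)=0$, then $V:=U_1-U_2$ is $\DD$-differentiable on $\TK$ with $V^\DD=0$; Proposition \ref{rappeldelta2} yields a constant $c\in\R$ with $V\equiv c$, and evaluating at $t=a$ gives $c=0$, so $U_1=U_2$. This also shows that the notation $\int_a^t u(\tau)\DD\tau$ is unambiguous once existence is known.

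For existence I would first record the structure of $\T$: being a bounded, hence compact, time scale, the complement $[a,b]\setminus\T$ is an at most countable union of pairwise disjoint open gaps $(\alpha_i,\beta_i)$ with $\alpha_i,\beta_i\in\T$, where the $\alpha_i$ are exactly the right-scattered points of $\T$ below $b$ and $\beta_i=\sigma(\alpha_i)$. Define $\hat u\colon[a,b)\to\R$ by $\hat u=u$ on $\TK$ and $\hat u\equiv u(\alpha_i)$ on each $[\alpha_i,\beta_i)$. Since $u$ is rd-continuous it is bounded on $\T$ and admits the relevant one-sided limits, from which one checks that $\hat u$ is regulated on $[a,b]$, hence Riemann integrable; set $U(t):=\int_a^t\hat u(s)\,ds$ for $t\in\T$. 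Then $U$ is continuous on $\T$ with $U(a)=0$, so it only remains to check $U^\DD(t)=u(t)$ for every $t\in\TK$, and then that $U^\DD=u\in\Crd(\TK)$, which holds by hypothesis. (Alternatively one could invoke the Bohner--Peterson machinery of pre-antiderivatives together with the induction principle on time scales, but the above is self-contained.)

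The verification splits according to whether $t$ is right-scattered or right-dense. If $t\in\RS$, then $t$ is the left endpoint of the gap $(t,\sigma(t))$, so $\hat u\equiv u(t)$ on $[t,\sigma(t))$ and $U(\sigma(t))-U(t)=\int_t^{\sigma(t)}\hat u=\mu(t)u(t)$; Theorem \ref{rappeldelta1}(2) gives $U^\DD(t)=u(t)$. The right-dense case is where the argument really has to do some work, and is the main obstacle. Given $t\in\RD$ and $\varepsilon>0$, rd-continuity makes $u$ continuous at $t$, so there is $\delta>0$ with $|u(r)-u(t)|<\varepsilon$ whenever $r\in\T$ and $|r-t|<\delta$. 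The key geometric observation is that any gap $(\alpha_i,\beta_i)$ meeting the interval with endpoints $t$ and a point $s\in\T$ with $|s-t|<\delta$ must have its left endpoint $\alpha_i$ lying between $s$ and $t$ (neither $s$ nor $t$, both in $\T$, can sit inside a gap), hence $|\alpha_i-t|<\delta$, so $|\hat u(r)-u(t)|<\varepsilon$ on that gap too. Therefore $|\hat u(r)-u(t)|<\varepsilon$ for all $r$ between $s$ and $t$, whence $\left|\frac{U(t)-U(s)}{t-s}-u(t)\right|=\left|\frac{1}{t-s}\int_s^t\bigl(\hat u(r)-u(t)\bigr)\,dr\right|\le\varepsilon$. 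Letting $s\to t$ with $s\in\T$, $s\neq t$, Theorem \ref{rappeldelta1}(3) gives $U^\DD(t)=u(t)$, completing the proof.
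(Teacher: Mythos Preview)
The paper does not supply a proof of this theorem: it is quoted verbatim from Bohner--Peterson \cite[Theorem~1.74]{bohn} and used as a black box. So there is no ``paper's own proof'' to compare against; the relevant benchmark is the proof in \cite{bohn}.

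Your argument is correct. Uniqueness via Proposition~\ref{rappeldelta2} is exactly the standard step. For existence, your construction --- extending $u$ to a regulated function $\hat u$ on the real interval $[a,b]$ by making it constant across each gap and then taking the ordinary Riemann integral --- is valid, and your verification of $U^\DD(t)=u(t)$ is clean: the right-scattered case is immediate, and in the right-dense case the crucial point (which you handle correctly) is that every gap meeting the segment between $t$ and a nearby $s\in\T$ has its left endpoint $\alpha_i\in\T$ within distance $\delta$ of $t$, so the extension inherits the $\varepsilon$-bound. One small remark: the boundedness of $u$ and the regulated property of $\hat u$ deserve one line of justification each (compactness of $\T$ plus the one-sided limit hypotheses in rd-continuity), but these are routine.

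It is worth noting that this is \emph{not} the route taken in \cite{bohn}. Bohner--Peterson work intrinsically on the time scale: they first build a \emph{pre-antiderivative} (a function $\DD$-differentiable except on a countable set with $U^\DD=u$ off that set) via the induction principle on time scales, and then upgrade it to a genuine antiderivative. Your approach instead exports the problem to ordinary real analysis via the gap extension $\hat u$, which is more elementary and avoids the induction principle entirely, at the cost of invoking the classical theory of regulated functions and the Riemann integral. You are evidently aware of this trade-off, since you mention the pre-antiderivative route as an alternative.
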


\begin{proposition}
The $\DD$-integral has the following properties:

\begin{itemize}
	
\item[(i)]if $c,d\in\mathbb{T}$ and $f$ and $g$
are $\DD$-differentiable, then the following formula
of integration by parts hold:
\begin{equation*}
\label{ippdelta1}
\int_{c}^{d} f(t) g^\DD(t)\Delta t
=\left.(fg)(t)\right|_{t=c}^{t=d}
-\int_{c}^{d} f^\DD(t) g(\sigma(t)) \Delta t.
\end{equation*}
	
\item[(ii)]if $c,d\in\mathbb{T}$, $f$ is $\nabla$-differentiable and $g$ and $\rho$ are $\DD$-differentiable, then the following formula
of integration by parts hold:
\begin{equation*}
\label{ippdelta2}
\int_{c}^{d} f(t) g^\DD(t)\Delta t
=\left.f(\rho(t))g(t)\right|_{t=c}^{t=d}
-\int_{c}^{d} \rho^\DD(t) f^\nabla(t) g(t) \Delta t.
\end{equation*}

\end{itemize}
\end{proposition}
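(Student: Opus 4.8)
The plan is to prove part (i) first, since it is the classical time‑scale integration by parts whose proof is available in the references and uses only $\DD$‑calculus; I would simply invoke the product rule $(fg)^\DD(t) = f^\DD(t) g(\sigma(t)) + f(t) g^\DD(t)$ — valid wherever $f$ and $g$ are $\DD$‑differentiable — and then $\DD$‑integrate both sides from $c$ to $d$, using the fundamental theorem of calculus ($\int_c^d (fg)^\DD(t)\,\DD t = (fg)(d) - (fg)(c)$) together with linearity of the $\DD$‑integral. The rd‑continuity of the integrands, needed to apply the Cauchy $\DD$‑integral theorem, follows from $\DD$‑differentiability on $\TK$ and continuity of $\sigma$.

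For part (ii), the idea is to rewrite the boundary term of (i) in terms of $f\circ\rho$ and to convert the remaining integral $\int_c^d f^\DD(t) g(\sigma(t))\,\DD t$ into one involving $f^\NN$ and $g$. The key inputs are Proposition~\ref{propbourdin} and Proposition~\ref{propinv}. Concretely, under the hypotheses of (ii) — $f$ is $\nabla$‑differentiable, $g$ and $\rho$ are $\DD$‑differentiable — formula \eqref{bourdinform2} gives $(f\circ\rho)^\DD(t) = \rho^\DD(t) f^\NN(t)$ on $\TKk$; applying the $\DD$‑product rule to $f(\rho(t)) g(t)$ yields
\begin{equation*}
\bigl(f(\rho(\cdot))\,g\bigr)^\DD(t) = \rho^\DD(t) f^\NN(t)\, g(\sigma(t)) + f(\rho(t))\, g^\DD(t),
\end{equation*}
and $\DD$‑integrating this from $c$ to $d$ produces exactly the asserted boundary term $f(\rho(t))g(t)\big|_{c}^{d}$ together with $\int_c^d \rho^\DD(t) f^\NN(t) g(\sigma(t))\,\DD t + \int_c^d f(\rho(t)) g^\DD(t)\,\DD t$. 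It then remains to reconcile this with the claimed right‑hand side, i.e. to show $\int_c^d f(\rho(t)) g^\DD(t)\,\DD t$ plus the $g(\sigma(t))$‑integral collapses to $-\int_c^d \rho^\DD(t) f^\NN(t) g(t)\,\DD t$; this is where one substitutes the expression for $f(t)g^\DD(t)$ coming from part (i) and uses $\rho^\DD(t)\sigma^\NN(t)=1$ to handle the change between $g(\sigma(t))$ and $g(t)$, more precisely the relation $g(\sigma(t)) = g(t) + \mu(t) g^\DD(t)$ on right‑scattered points and $g(\sigma(t))=g(t)$ elsewhere.

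I expect the main obstacle to be the bookkeeping on the exceptional points. Propositions~\ref{propbourdin} and~\ref{propinv} only give the pointwise formulas on $\TKk$, and the product rule for $f\circ\rho$ requires $\rho$ to be $\DD$‑differentiable, so one must check that the set $\T\setminus\TKk$ (together with any points where $\rho$ or $\sigma$ fail to be continuous) has $\DD$‑measure zero or is otherwise negligible for the Cauchy $\DD$‑integral, and that the boundary contributions at $t=c$ and $t=d$ are correctly evaluated when $c$ or $d$ is left‑scattered. Handling the interplay between $g\circ\sigma$ and $g$ — equivalently, deciding whether the final formula is stated with $g(\sigma(t))$ or $g(t)$ inside the integral — is the delicate point; the identity $\rho^\DD(t)\sigma^\NN(t)=1$ from Proposition~\ref{propinv} is precisely what makes the two versions consistent, and I would lean on it together with a case split on $t\in\LS\cap\RS$ versus $t\in\LD\cap\RD$ exactly as in the proof of that proposition.
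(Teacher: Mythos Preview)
Your plan for (i) is correct and is exactly the standard argument; the paper simply cites this as well known.

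For (ii) you have the right starting point --- differentiate $t\mapsto f(\rho(t))g(t)$ and $\DD$-integrate --- but you chose the inconvenient variant of the product rule, and that is what creates the ``reconciliation'' problem you worry about. The $\DD$-product rule has two equivalent forms,
\[
(uv)^\DD = u\,v^\DD + u^\DD v^\sigma
\qquad\text{and}\qquad
(uv)^\DD = u^\DD v + u^\sigma v^\DD .
\]
You used the first with $u=f\circ\rho$, $v=g$, which produces $f(\rho(t))\,g^\DD(t)$ and $g(\sigma(t))$ and forces you into the case analysis. The paper instead invokes the Leibniz formula of Bourdin (Proposition~7, p.~552 in \cite{bourdin}), which amounts to using the \emph{second} form: with $u=f\circ\rho$ one gets
\[
(f\circ\rho\cdot g)^\DD(t)=(f\circ\rho)^\DD(t)\,g(t)+(f\circ\rho)(\sigma(t))\,g^\DD(t)
=\rho^\DD(t)f^\nabla(t)\,g(t)+f\bigl(\rho(\sigma(t))\bigr)\,g^\DD(t).
\]
Now the point you are missing: since $\rho$ is assumed $\DD$-differentiable, it is continuous, and Proposition~\ref{propcontinuity} gives $\rho(\sigma(t))=t$. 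Hence the last term is simply $f(t)\,g^\DD(t)$, and $\DD$-integrating from $c$ to $d$ yields (ii) immediately --- no appeal to Proposition~\ref{propinv}, no $g\circ\sigma$ versus $g$ bookkeeping, no case split.

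Your proposed reconciliation can in fact be pushed through (it boils down to checking $\nu(t)=\rho^\DD(t)\mu(t)$ pointwise, which holds on $\LS\cap\RS$ and trivially on $\LD\cap\RD$), but the sentence ``substitute the expression for $f(t)g^\DD(t)$ coming from part (i)'' is not right as stated --- (i) is an integral identity, not a pointwise one --- and the whole detour is unnecessary once you use $\rho\circ\sigma=\mathrm{id}$.
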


The first integration by parts formula (i) is well known in the litterature of the time scale calculus. The second (ii) is obtain by $\DD$-integration of the Leibniz formula \cite[Proposition 7 p.552]{bourdin}. \\

Let us now remind some definitions of variationnal calculus.

\begin{definition}
Let $L$ be a Lagrangian \textit{i.e.} a continuous map of class $C^1$ in its two last variables:
\begin{equation*}
\fonction{L}{\TK \times \R^n \times \R^n}{\R}{(t,x,v)}{L(t,x,v)}
\end{equation*}
and let $\L$ be the following Lagrangian functional:
\begin{equation*}
\fonction{\L}{\Cdrd(\T)}{\R}{u}{\di \int_a^b L(\tau,u(\tau),u^\DD(\tau)) \DD \tau.}
\end{equation*}
We define $\Cdrdz(\T) = \{ w \in \Cdrd (\T), \; w(a)= w(b) = 0\}$ to be the set of \textit{variations} of $\L$. A function $u \in \Cdrd(\T)$ is said to be a \textit{critical point} of $\L$ if $D\L(u)(w) = 0$ for every $w \in \Cdrdz(\T)$ where $D$ denotes the Frechet derivative.
\end{definition}

We remind what we call a strong form of the well known Dubois--Reymond lemma on time scales.

\begin{lemma}[Dubois--Reymond strong form {\cite[Lemma 4.1]{bohn2}}]
Let $q\in \mathrm{C}^{0}_{\mathrm{rd}}(\TK,\R^n)$. Then the equality

\begin{equation*}
\int_{a}^{b} q(\tau) \cdot w^\DD(\tau) \DD \tau = 0
\end{equation*}

holds for every $w\in \Cdrdz(\T,\R^n)$ if and only if there exist $c\in\R^n$ such that $q(t)=c$ for all $t\in \TK$.

\end{lemma}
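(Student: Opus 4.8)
The plan is to prove the Dubois--Reymond strong form by reduction to the known version of the Dubois--Reymond lemma together with Proposition~\ref{rappeldelta2}. The direction ``if $q$ is constant then the integral vanishes'' is immediate: if $q(t)=c$ for all $t\in\TK$, then $\int_a^b c\cdot w^\DD(\tau)\,\DD\tau = c\cdot\bigl(w(b)-w(a)\bigr) = 0$ for every $w\in\Cdrdz(\T,\R^n)$, using the fundamental theorem of $\DD$-calculus and the boundary conditions defining $\Cdrdz$. So the substance is in the forward direction.

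For the forward direction, suppose $\int_a^b q(\tau)\cdot w^\DD(\tau)\,\DD\tau = 0$ for every $w\in\Cdrdz(\T,\R^n)$. First I would exhibit a candidate constant. Since $q\in\Crd(\TK,\R^n)$, it has a $\DD$-antiderivative $Q(t)=\int_a^t q(\tau)\,\DD\tau$; set $c := \frac{1}{b-a}Q(b) = \frac{1}{b-a}\int_a^b q(\tau)\,\DD\tau$. The goal is to show $q\equiv c$. The idea is to feed the hypothesis a carefully chosen variation $w$ built from $q-c$ itself. Concretely, define $w(t) := \int_a^t \bigl(q(\tau)-c\bigr)\,\DD\tau$. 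Then $w\in\Cdrd(\T,\R^n)$ with $w^\DD = q-c$ on $\TK$, $w(a)=0$, and $w(b) = Q(b) - (b-a)c = 0$ by the choice of $c$; hence $w\in\Cdrdz(\T,\R^n)$ and is an admissible variation. Plugging it in,
\begin{equation*}
0 = \int_a^b q(\tau)\cdot w^\DD(\tau)\,\DD\tau = \int_a^b q(\tau)\cdot\bigl(q(\tau)-c\bigr)\,\DD\tau.
\end{equation*}
Separately, since the constant function $\tau\mapsto c$ is $\DD$-differentiable with derivative $0$, it is certainly rd-continuous, and one also has $\int_a^b c\cdot w^\DD(\tau)\,\DD\tau = c\cdot(w(b)-w(a)) = 0$, i.e. $\int_a^b c\cdot\bigl(q(\tau)-c\bigr)\,\DD\tau = 0$. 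Subtracting,
\begin{equation*}
\int_a^b \bigl|q(\tau)-c\bigr|^2\,\DD\tau = 0.
\end{equation*}

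The last step is to conclude from $\int_a^b|q-c|^2\,\DD\tau = 0$, with $|q-c|^2$ rd-continuous and nonnegative, that $q-c$ vanishes identically on $\TK$. This is where one must be slightly careful on a general time scale: at a right-scattered point $t$ the $\DD$-integral over $[t,\sigma(t))$ contributes $\mu(t)|q(t)-c|^2$, forcing $q(t)=c$; at a right-dense point one uses rd-continuity and the positivity of the integral over a small interval in the classical way (this is exactly the standard argument that a nonnegative rd-continuous function with zero $\DD$-integral is zero, e.g.\ via the antiderivative being nondecreasing and vanishing at both endpoints). Thus $q(t)=c$ for every $t\in\TK$, completing the proof. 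The only mild obstacle is this final positivity argument on a mixed time scale, but it is routine; alternatively one can cite the non-strong Dubois--Reymond/fundamental lemma on time scales directly for the right-dense part. Everything else is a direct application of the antiderivative construction and the definition of $\Cdrdz(\T)$.
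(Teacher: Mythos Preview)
The paper does not supply its own proof of this lemma; it simply quotes it from \cite[Lemma~4.1]{bohn2}. Your argument is correct and is in fact the standard proof given there: set $c$ equal to the $\Delta$-average of $q$, take $w(t)=\int_a^t(q(\tau)-c)\,\Delta\tau$ as the test variation (which lies in $\Cdrdz(\T,\R^n)$ by construction), and obtain $\int_a^b|q-c|^2\,\Delta\tau=0$, whence $q\equiv c$ on $\TK$ by nonnegativity. The only point to keep tidy is the last step, and your sketch (splitting into right-scattered and right-dense points) is the right way to handle it; this is also how the cited reference argues.
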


Up to our knowledge, the classical form of the Dubois--Reymond lemma on time scales has not been used or proved. We give the result and we call it the weak form of the Dubois--Reymond lemma.

\begin{lemma}[Dubois--Reymond weak form]
Let $q\in \Crd(\TK,\R^n)$. Then the equality

\begin{equation*}
\int_{a}^{b} q(\tau) \cdot w(\tau) \DD \tau = 0
\end{equation*}

holds for every $w\in \Crd(\TK,\R^n)$ such that $w(a)=w(b)=0$ if and only if $q(t)=0$ for all $t\in \TKk$.

\end{lemma}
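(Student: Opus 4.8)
The plan is to prove the two implications separately; all the content sits in the ``only if'' part. For the ``if'' part, suppose $q(t)=0$ for all $t\in\TKk$. Since $\TK\setminus\TKk=\TK\cap[a,\sigma(a)[$, which equals $\{a\}$ when $a\in\RS$ and is empty when $a\in\RD$, the only point of $\TK$ at which $q$ need not vanish is $a$. Hence for any $w\in\Crd(\TK,\R^n)$ with $w(a)=w(b)=0$ the function $q\cdot w$ is identically zero on $\TK$ — at $a$ because $w(a)=0$, and elsewhere because $q=0$ — so $\int_a^b q(\tau)\cdot w(\tau)\,\DD\tau=0$. For the converse I would assume the integral vanishes for all such $w$, fix a point $t_0\in\TKk$, and produce one test function localised at $t_0$ forcing $q(t_0)=0$; the construction splits according to whether $t_0$ is right-scattered or right-dense.

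Suppose first $t_0\in\RS$. Then $a<t_0<b$: right-scatteredness gives $\sigma(t_0)>t_0$, hence $t_0\neq b$, and $t_0\in\Tk$ forces $t_0\neq a$. I would take $w$ equal to $q(t_0)$ at the point $t_0$ and to $0$ at every other point of $\T$; a direct check shows this $w$ is rd-continuous, and it satisfies $w(a)=w(b)=0$. Splitting $\int_a^b=\int_a^{t_0}+\int_{t_0}^{\sigma(t_0)}+\int_{\sigma(t_0)}^{b}$, the two outer integrals vanish because $q\cdot w\equiv 0$ on the corresponding domains, while $\int_{t_0}^{\sigma(t_0)}(q\cdot w)\,\DD\tau=\mu(t_0)\,q(t_0)\cdot q(t_0)$ by the standard identity $\int_{t_0}^{\sigma(t_0)}g\,\DD\tau=\mu(t_0)g(t_0)$. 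Since $\mu(t_0)>0$, this gives $q(t_0)=0$.

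Suppose now $t_0\in\RD$. Then $q$ is continuous at $t_0$, so assuming $q(t_0)\neq 0$ we may pick $\delta>0$ with $q(t)\cdot q(t_0)>\frac{1}{2}\,q(t_0)\cdot q(t_0)>0$ for every $t$ in a one-sided $\delta$-neighbourhood of $t_0$ lying inside $[a,b]$: to the right of $t_0$ when $t_0<b$, and to the left of $t_0$ when $t_0=b$ (in the latter case $t_0\in\LD$ because $t_0\in\TK$, so $\T$ still clusters at $t_0$ from the left). On that neighbourhood I would let $w$ be the constant vector $q(t_0)$ times the scalar tent $(t-t_0)((t_0+\delta)-t)$ (respectively $(t-(b-\delta))(b-t)$), and $w=0$ elsewhere; then $w$ is continuous on $\R$, hence rd-continuous on $\T$, and $w(a)=w(b)=0$. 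Now $\int_a^b(q\cdot w)\,\DD\tau$ is the $\DD$-integral of a nonnegative rd-continuous function bounded below by a positive constant on some set $[d_1,d_2]\cap\T$ with $a<d_1<d_2<b$; since $\int_{d_1}^{d_2}1\,\DD\tau=d_2-d_1>0$ — here the right-density (respectively left-density) of $t_0$ is exactly what guarantees a strictly positive $\DD$-mass near $t_0$ — the integral is strictly positive, contradicting the hypothesis. Hence $q(t_0)=0$, and as $t_0\in\TKk$ was arbitrary, the claim follows.

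I expect the only delicate point to be the boundary bookkeeping: one must check that $t_0$ cannot be an endpoint in the right-scattered case, and that in the right-dense case the tent can always be placed on a side where $\T$ accumulates while still vanishing at both $a$ and $b$ — and this is precisely where the hypothesis $t_0\in\TKk$, i.e.\ the definitions of $\TK$ and $\Tk$, is used. Everything else — the rd-continuity of the spike and tent functions, additivity and monotonicity of the $\DD$-integral, and the elementary formulas $\int_{t_0}^{\sigma(t_0)}g\,\DD\tau=\mu(t_0)g(t_0)$ and $\int_{c}^{d}1\,\DD\tau=d-c$ — is routine.
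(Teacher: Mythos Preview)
Your proof is correct, but it takes a different route from the paper's. The paper uses a single global test function: it sets $r(\tau)=(\tau-a)^2(\tau-b)^2$ and $w(\tau)=r(\tau)q(\tau)$, which is rd-continuous and vanishes at $a$ and $b$, so the hypothesis gives
\[
0=\int_a^b q(\tau)\cdot w(\tau)\,\DD\tau=\int_a^b \|q(\tau)\|^2\,r(\tau)\,\DD\tau,
\]
and since the integrand is nonnegative the conclusion follows. You instead localise, building a spike at right-scattered points and a tent near right-dense points, and treat each $t_0\in\TKk$ individually. The paper's argument is much shorter to write down, but note that its last step---``nonnegative rd-continuous integrand with zero $\DD$-integral vanishes on $\TKk$''---is asserted without proof, and justifying it carefully requires exactly the right-scattered/right-dense dichotomy and the positivity-of-$\DD$-mass reasoning that you carry out explicitly. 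So your version is longer but genuinely self-contained, while the paper's is slicker at the cost of hiding the real work in that final implication. Your boundary bookkeeping (why $t_0\neq a,b$ in the scattered case, why one can always place the tent on a side where $\T$ accumulates) is handled correctly.
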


\begin{proof}
The sufficient condition is obvious. For the necessary one : Let $r(\tau)=(\tau-a)^2(\tau-b)^2$ which is clearly positive for all $\tau \in \T$ and vanish at $\tau=a$ and $\tau=b$. Let $w(\tau)=r(\tau)q(\tau)$. We have
\begin{equation*}
0=\int_{a}^{b} q(\tau) \cdot w(\tau) \DD \tau = \int_{a}^{b} \|q(\tau)\|^2 r(\tau)\DD \tau.
\end{equation*}

As $\|q(\tau)\|^2 r(\tau)\ge0$ for all $t\in \TKk$ then necessarily we have $q(\tau)=0$ for all $t\in \TKk$.

\end{proof}


\section{Time scale embeddings}
We remind the time scale embedding as defined in \cite{ctm} to which we refer for more details.\\

We denote by $C([a,b];\R)$ the set of continuous functions $x : [a,b] \rightarrow \R$. A time scale embedding is given by specifying:
\begin{itemize}
\item A mapping $\iota : C([a,b],\R) \rightarrow C(\T,\R )$;
\item An operator $\delta : C^1([a,b],\R) \rightarrow \Cdrd(\TK,\R)$,
called a generalized derivative;
\item An operator $J: C([a,b],\R)\rightarrow \Crd(\T,\R)$,
called a generalized integral operator.
\end{itemize}

We fix the following embedding:

\begin{definition}[Time scale $\DD$-embedding]
The mapping $\iota$ is obtained by restriction of functions to $\T$.
The operator $\delta$ is chosen to be the $\Delta$ derivative, and the operator
$J$ is given by the $\Delta$-integral as follows:
$$
\delta u(t) := u^\DD(t) \, ,  \quad
Ju(t) := \di\int_a^{\sigma (t)} u(s) \Delta s\, .
$$
\end{definition}

\begin{definition}[Time scale $\Delta$-embedding of differential equations]
\label{embeddingdofdifferential}
The $\DD$-differential embedding of an ordinary differential equation

\begin{equation*}
\frac{dx(t)}{dt}=f(t,x(t))
\end{equation*}
for $x \in C^1([a,b],\R)$ and $f\in C(\R \times C^1([a,b],\R), \R)$, is given by

\begin{equation*}
x^\DD(t)=f(t,x(t))
\end{equation*}
for $x\in \Cdrd(\TK,\R)$ and $f\in C(\T \times \Cdrd(\TK,\R), \R)$.
\end{definition}

\begin{definition}[Time scale $\Delta$-embedding of integral equations]
\label{embeddingdofintegral}
The $\DD$-integral embedding of an integral equation

\begin{equation*}
x(t)=x(a)+\int_a^t f(s,x(s))ds
\end{equation*}
for $x \in C^1([a,b],\R)$ and $f\in C(\R \times C^1([a,b],\R), \R)$, is given by

\begin{equation*}
x(t)=x(a)+\int_a^{\sigma(t)} f(s,x(s))\DD s
\end{equation*}
for $x\in \Cdrd(\TK,\R)$ and $f\in C(\T \times \Cdrd(\TK,\R), \R)$.
\end{definition}

\begin{definition}[Time scale $\DD$-embedding of integral functionals]
\label{embeddingfunctional}
Let $L : [a,b] \times \R^2 \rightarrow \R$ be a continuous function and $\mathcal{L}$
the functional defined by
\begin{equation*}
\mathcal{L}(x) = \int_a^t L\left(s,x(s),\frac{dx(s)}{dt}\right) ds.
\end{equation*}
The time scale $\DD$-embedding $\mathcal{L}_{\Delta}$ of $\mathcal{L}$ is given by
\begin{equation*}
\mathcal{L}_{\Delta}(x) =\int_a^{\sigma(t)} L\left(s,x(s),x^\DD(s)\right) \Delta s.
\end{equation*}
\end{definition}


\section{Classical Helmholtz Hamiltonian problem : a brief survey}
This section is based on the book of \cite{santilli} to which we refer for more details. \\

\subsection{Generalities and notations}

We work on $\R^{2d},d \ge 1, d \in \N$. We denote by ${}^T$ the transpose. The symplectic scalar product ${< \cdot,\cdot>}_J$ is defined for all $X,Y \in \R^{2d}$ by

\begin{equation*}
{<X,Y>}_J = <X,J\cdot Y>
\end{equation*}

where $< \cdot,\cdot>$ denote the usual scalar product and $J = \begin{pmatrix} 0 & I_d \\ -I_d & 0 \end{pmatrix}$ with $I_d$ the identity matrix on $\R^d$.

\begin{definition}
We define the $L^2$ symplectic scalar product induced by ${<\cdot,\cdot>_J}$ defined for $f,g \in C^1([a,b],\R^{2d})$ by 

\begin{equation*}
<f,g>_{L^2,J}\ =\int_{a}^{b}<f(t),g(t)>_Jdt \ .
\end{equation*}

\end{definition}

\begin{definition}
Let $\fonctionsansdef{A}{C^0([a,b],\R^{2d})}{C^0([a,b],\R^{2d})}$. We define the adjoint $A^*_J$ of $A$ with respect to $<\cdot,\cdot>_{L^2,J}$ by

\begin{equation*}
<A \cdot f, g>_{L^2,J} = <A^{*}_J \cdot g, f>_{L^2,J} \ .
\end{equation*}
\end{definition}

\begin{definition}[Classical Hamiltonian]
A classical Hamiltonian is a function $H : \R^d \times \R^d \rightarrow \R$ such that for $(q,p)\in C^1([a,b],\R^d) \times C^1([a,b],\R^d)$ we have the time evolution of $(q,p)$ given by the classical Hamilton's equations

\begin{align*}
\left\{
\begin{array}{l l}
\frac{dq}{dt}&=\frac{\partial H(q,p)}{\partial p} \\
\frac{dp}{dt}&=-\frac{\partial H(q,p)}{\partial q}
\end{array}
\right.
\label{def_hamilton}
\end{align*}

\end{definition}

\begin{remark}
We say that $X=\begin{pmatrix} \frac{\partial H(q,p)}{\partial p} \\ -\frac{\partial H(q,p)}{\partial q} \end{pmatrix}$ is Hamiltonian.
\end{remark}

\begin{theorem}
The critical points $(q,p)\in C^1([a,b],\R^d) \times C^1([a,b],\R^d)$ of the functional 

\begin{equation*}
\fonction{\mathcal{L}_H}{C^1([a,b],\R^d)\times C^1([a,b],\R^d)}{\R}{(q,p)}{\mathcal{L}_H(q,p) = \int_{a}^{b}L_H(q(t),p(t),\dot{q}(t),\dot{p}(t))dt}
\end{equation*}

satisfy the Hamilton's equations where $\fonctionsansdef{L_H}{\R^d \times \R^d \times \R^d \times \R^d}{\R}$ is the Lagrangian defined by

\begin{equation*}
L_H(x,y,v,w)=<y,v>-H(x,y)
\end{equation*}

\label{thm_hamiltonian}
\end{theorem}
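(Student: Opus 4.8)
The plan is to compute the Fréchet derivative of $\mathcal{L}_H$ directly and then invoke the classical fundamental lemma of the calculus of variations to peel off the Hamilton equations. Concretely, fix a critical point $(q,p)$ and a variation $(w_q,w_p) \in C^1([a,b],\R^d)\times C^1([a,b],\R^d)$ with $w_q(a)=w_q(b)=w_p(a)=w_p(b)=0$. Writing $L_H(x,y,v,w) = \langle y,v\rangle - H(x,y)$, I would expand
\begin{equation*}
D\mathcal{L}_H(q,p)(w_q,w_p) = \int_a^b \left( \langle w_p, \dot q\rangle + \langle p, \dot w_q\rangle - \left\langle \frac{\partial H}{\partial q}, w_q\right\rangle - \left\langle \frac{\partial H}{\partial p}, w_p\right\rangle \right) dt,
\end{equation*}
using that $L_H$ does not depend on $w$ (the $\dot p$ slot), so no $\dot w_p$ term appears.

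The second step is an integration by parts on the term $\langle p, \dot w_q\rangle$: since $w_q$ vanishes at the endpoints $a$ and $b$, the boundary term drops and $\int_a^b \langle p,\dot w_q\rangle\,dt = -\int_a^b \langle \dot p, w_q\rangle\,dt$. Substituting this back, the derivative becomes
\begin{equation*}
D\mathcal{L}_H(q,p)(w_q,w_p) = \int_a^b \left\langle \dot q - \frac{\partial H}{\partial p}, w_p\right\rangle dt - \int_a^b \left\langle \dot p + \frac{\partial H}{\partial q}, w_q\right\rangle dt.
\end{equation*}
Setting this equal to zero for all admissible $(w_q,w_p)$ and choosing first $w_q = 0$ with $w_p$ arbitrary (vanishing at the endpoints), then $w_p = 0$ with $w_q$ arbitrary, the fundamental lemma of the calculus of variations forces $\dot q - \partial H/\partial p = 0$ and $\dot p + \partial H/\partial q = 0$ pointwise on $[a,b]$, which are exactly Hamilton's equations.

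There is no serious obstacle here; the argument is the standard derivation of the Hamilton equations from the phase-space (Hamilton) variational principle. The only points requiring a little care are: checking that $\mathcal{L}_H$ is genuinely Fréchet differentiable on the product Banach space (which follows from $L$, hence $H$, being $C^1$, together with continuity of the derivatives on the compact interval $[a,b]$), and making sure the two independent families of variations $w_q$ and $w_p$ are used separately so that the fundamental lemma applies to each Euler--Lagrange-type expression in isolation. I would also note in passing that the functional in the statement is written with upper limit $b$ (the displayed $\mathcal{L}_H$ integrates from $a$ to $b$), so the endpoint conditions $w_q(a)=w_q(b)=0$ are exactly what kills the boundary term — this is the analogue of the $\Cdrdz$ space of variations introduced earlier, and it is what makes the integration by parts clean.
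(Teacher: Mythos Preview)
Your proof is correct and follows the standard route. The paper does not supply a separate proof of this classical statement (it is quoted in the survey section), but the argument you give is precisely the one the paper carries out for the time-scale generalisation: compute the Fr\'echet derivative of the functional, integrate by parts on the $\langle p,\dot w_q\rangle$ term using the vanishing endpoint conditions, and then apply the (weak) Dubois--Reymond lemma to each of the two independent variations to extract Hamilton's equations.
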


\subsection{Classical Helmholtz Hamiltonian theorem}

We consider the differential equations associate to a vector field $X=\begin{pmatrix} X_q \\ X_p \end{pmatrix}$ ,

\begin{align*}
\frac{d}{dt}\begin{pmatrix}q \\ p \end{pmatrix} = \begin{pmatrix} X_q(q,p) \\ X_p(q,p) \end{pmatrix}.
\end{align*}

It defines a natural operator which is written as

\begin{equation*}
O_X(q,p) = \begin{pmatrix} \frac{dq}{dt} - X_q(q,p) \\ \frac{dp}{dt} - X_p(q,p) \end{pmatrix} \ .
\end{equation*}

In this case we have the Hamiltonian Helmholtz conditions :

\begin{proposition}[Classical Hamiltonian Helmholtz conditions {\cite[Theorem 2.7.3, p.88]{santilli}}]
The operator $O_X$ has its Fr\'echet derivative self-adjoint at $(q,p)$ if and only if
\begin{align*}
& \frac{\partial X_q(q,p)}{\partial q} + \left(\frac{\partial X_p(q,p)}{\partial p} \right)^T = 0 \\
& \frac{\partial X_q(q,p)}{\partial p} \ \text{and} \ \frac{\partial X_p(q,p)}{\partial q} \ \text{are symmetric}.
\end{align*}

\end{proposition}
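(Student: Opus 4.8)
The plan is to compute $DO_X(q,p)$ explicitly, recognise it as the sum of the differentiation operator and pointwise multiplication by the Jacobian matrix of the vector field, observe that the differentiation part is automatically self-adjoint for $<\cdot,\cdot>_{L^2,J}$, and thereby reduce the self-adjointness of $DO_X(q,p)$ to a purely algebraic identity on a $2d\times 2d$ matrix; expanding that identity into $d\times d$ blocks then produces exactly the three listed conditions.

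First I would differentiate. For a variation $(u,v)\in C^1([a,b],\R^d)\times C^1([a,b],\R^d)$ one gets
\begin{equation*}
DO_X(q,p)\cdot\begin{pmatrix} u \\ v\end{pmatrix} = \begin{pmatrix} \dot u \\ \dot v\end{pmatrix} - M(t)\begin{pmatrix} u \\ v\end{pmatrix}, \qquad M(t)=\begin{pmatrix} \dfrac{\partial X_q}{\partial q} & \dfrac{\partial X_q}{\partial p} \\[2mm] \dfrac{\partial X_p}{\partial q} & \dfrac{\partial X_p}{\partial p}\end{pmatrix},
\end{equation*}
the partial derivatives being evaluated at $(q(t),p(t))$. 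Writing $Z=(u,v)^T$, this reads $DO_X(q,p)\cdot Z = \dot Z - M Z$, so $DO_X(q,p)$ is the sum of $\frac{d}{dt}$ and multiplication by $-M(t)$.

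Second, I would handle the differentiation part. Since $J$ is constant and $J^T=-J$, for $f,g\in C^1([a,b],\R^{2d})$ we have $\frac{d}{dt}<f,g>_J = <\dot f,g>_J + <f,\dot g>_J$; integrating over $[a,b]$ and discarding the boundary term (the adjoint being taken on the space of variations vanishing at $t=a$ and $t=b$, as in the variational setting above) gives $<\dot f,g>_{L^2,J} = -<f,\dot g>_{L^2,J} = <\dot g,f>_{L^2,J}$, the last step because $<\xi,\eta>_J = -<\eta,\xi>_J$. Hence $\frac{d}{dt}$ is self-adjoint for $<\cdot,\cdot>_{L^2,J}$, so $DO_X(q,p)$ is self-adjoint if and only if the multiplication operator $Z\mapsto M(\cdot)Z$ is, i.e. if and only if $<Mf,g>_{L^2,J}=<Mg,f>_{L^2,J}$ for all such $f,g$. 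Testing with functions that are a scalar bump times a constant vector and invoking the fundamental lemma of the calculus of variations, this is equivalent to the pointwise identity $\xi^T M(t)^T J\eta = -\xi^T J M(t)\eta$ for all $\xi,\eta\in\R^{2d}$ and all $t$, that is
\begin{equation*}
M(t)^T J + J M(t) = 0 \quad\Longleftrightarrow\quad J M(t)\ \text{is symmetric.}
\end{equation*}

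Finally I would expand this in $d\times d$ blocks: a direct computation gives
\begin{equation*}
JM = \begin{pmatrix} \dfrac{\partial X_p}{\partial q} & \dfrac{\partial X_p}{\partial p} \\[2mm] -\dfrac{\partial X_q}{\partial q} & -\dfrac{\partial X_q}{\partial p}\end{pmatrix},
\end{equation*}
and imposing $(JM)^T=JM$ yields, from the two diagonal blocks, that $\frac{\partial X_p}{\partial q}$ and $\frac{\partial X_q}{\partial p}$ are symmetric, and, from equating the off-diagonal blocks, $\frac{\partial X_q}{\partial q} + \big(\frac{\partial X_p}{\partial p}\big)^T = 0$; these are precisely the stated Helmholtz conditions, and the converse is immediate since every implication above is an equivalence. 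The step requiring the most care is the functional-analytic bookkeeping in the third paragraph — pinning down the space on which $A^*_J$ is taken so that the boundary terms genuinely vanish, and justifying the passage from the $L^2$-identity to the pointwise matrix identity; the block computation at the end is routine linear algebra.
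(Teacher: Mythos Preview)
The paper does not give its own proof of this classical proposition: it is quoted verbatim from Santilli with a citation and left unproved. What the paper does prove is the time-scale analogue (the Proposition computing $DO^\T_X$ and $DO^{\T,*}_{X,J}$, followed by the Lemma obtained ``by identification''), and your argument specialises to exactly that computation when $\T=[a,b]$.

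Your proof is correct and is essentially the same strategy the paper uses for the time-scale version: compute the Fr\'echet derivative, compute the $L^2$-symplectic adjoint, and compare. The only organisational difference is that you split $DO_X(q,p)=\frac{d}{dt}-M(t)$, dispose of the $\frac{d}{dt}$ part once and for all by the antisymmetry of $<\cdot,\cdot>_J$, and then package the remaining condition as the single matrix identity $JM=(JM)^T$ before reading off the blocks; the paper instead writes out all four block entries of the adjoint directly and matches them against the original. Your packaging is a bit cleaner conceptually (it makes the link with closedness of the associated one-form, noted in the paper's Remark, more transparent), while the paper's brute-force identification has the advantage of carrying over unchanged to the time-scale setting where the mixed $\DD/\nabla$ derivatives make a clean ``$\frac{d}{dt}$ is self-adjoint'' statement less immediate. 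Your caveat about the functional-analytic bookkeeping (space of variations, vanishing boundary terms, passage to pointwise) is well placed and is handled in the paper's setting by restricting to $\CDNz$ and invoking the Dubois--Reymond lemmas.
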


We have then

\begin{theorem}[{\cite[Theorem 3.12.1-2, p.176-177]{santilli}}]
The vector field X is Hamiltonian if and only if the operator $O_X$ has his Fr\'echet derivative self-adjoint with respect to the symplectic scalar product. \\
	
In this case the Hamiltonian associate to $X$ is given by 
	
\begin{equation*}
H(q,p)=\int_{0}^{1}\left[p \cdot X_q(\lambda q, \lambda p) - q\cdot X_p(\lambda q, \lambda p) \right]d\lambda.
\end{equation*}
	
\end{theorem}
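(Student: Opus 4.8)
The plan is to route both implications through the Classical Hamiltonian Helmholtz conditions of the preceding Proposition, which assert that $O_X$ has self-adjoint Fr\'echet derivative at $(q,p)$ precisely when $\partial_q X_q + (\partial_p X_p)^T = 0$ and $\partial_p X_q$, $\partial_q X_p$ are symmetric. Hence it suffices to prove that $X$ is Hamiltonian if and only if these three algebraic identities hold at every point of $\R^{2d}$, and, in that case, that the displayed homotopy integral indeed furnishes a Hamiltonian for $X$. Throughout, the passage between self-adjointness of $DO_X$ (a priori a statement about operators on $C^0([a,b],\R^{2d})$) and the pointwise Jacobian identities is obtained by testing first against constant and then against general smooth curves, together with the direct computation of the adjoint: $DO_X(q,p)$ sends a variation $(u,v)$ to $\dot u-\partial_q X_q\,u-\partial_p X_q\,v$ in the first slot and analogously in the second, the two integration-by-parts formulas move the derivative onto the test function with no boundary contribution (variations vanish at $a$ and $b$), and the matrix blocks are transposed, after which self-adjointness is visibly equivalent to the three conditions above.

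For necessity, suppose $X$ is Hamiltonian, so $X_q=\partial H/\partial p$ and $X_p=-\partial H/\partial q$ for some $H\in C^2$. Differentiating gives $\partial_q X_q=\partial^2 H/\partial q\,\partial p$, $\partial_p X_p=-\partial^2 H/\partial p\,\partial q$, $\partial_p X_q=\partial^2 H/\partial p^2$ and $\partial_q X_p=-\partial^2 H/\partial q^2$. By Schwarz's theorem the two diagonal Hessian blocks are symmetric, while $\partial^2 H/\partial q\,\partial p=(\partial^2 H/\partial p\,\partial q)^T$, which is exactly $\partial_q X_q+(\partial_p X_p)^T=0$. Thus the Helmholtz conditions hold and, by the Proposition, $DO_X$ is self-adjoint with respect to the symplectic scalar product.

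For sufficiency, assume the Helmholtz conditions hold on $\R^{2d}$ and define $H$ by the stated integral. Differentiating under the integral sign (legitimate since $X\in C^1$), one gets componentwise $\partial H/\partial p_i=\int_0^1\big(X_{q,i}(\lambda q,\lambda p)+\lambda\,p\cdot\partial_{p_i}X_q(\lambda q,\lambda p)-\lambda\,q\cdot\partial_{p_i}X_p(\lambda q,\lambda p)\big)\,d\lambda$. Using symmetry of $\partial_p X_q$ to rewrite $p\cdot\partial_{p_i}X_q$ as $\sum_j p_j\partial_{p_j}X_{q,i}$, and the identity $\partial_{p_i}X_{p,j}=-\partial_{q_j}X_{q,i}$ (the first Helmholtz condition) to rewrite $-q\cdot\partial_{p_i}X_p$ as $\sum_j q_j\partial_{q_j}X_{q,i}$, the integrand collapses to $X_{q,i}(\lambda q,\lambda p)+\lambda\frac{d}{d\lambda}X_{q,i}(\lambda q,\lambda p)=\frac{d}{d\lambda}\big(\lambda X_{q,i}(\lambda q,\lambda p)\big)$, whose integral over $[0,1]$ is $X_{q,i}(q,p)$. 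An entirely parallel manipulation, now invoking symmetry of $\partial_q X_p$ and again the first condition in the form $\partial_{q_i}X_{q,j}=-\partial_{p_j}X_{p,i}$, yields $\partial H/\partial q_i=-X_{p,i}(q,p)$. Hence $X$ obeys Hamilton's equations for this $H$, i.e. $X$ is Hamiltonian.

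The main obstacle is the bookkeeping in the sufficiency step: one must track the index gymnastics so that all three Helmholtz conditions get used to convert the integrand into an exact $\lambda$-derivative, and one must make sure that the self-adjointness hypothesis, stated at the level of operators, genuinely forces the Jacobian identities pointwise on all of $\R^{2d}$. By comparison the computation of $DO_X^*$ is routine, once the appropriate integration-by-parts identity and the vanishing of boundary terms on the space of variations are in place.
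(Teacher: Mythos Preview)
The paper does not prove this statement: it is quoted verbatim from Santilli's book as part of the survey in Section~4, with no argument supplied. There is therefore no ``paper's own proof'' to compare against for this particular theorem.

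That said, your proof is correct and is the standard one. The necessity direction via Schwarz is immediate, and your sufficiency computation is carried out carefully: the three Helmholtz identities are exactly what is needed to rewrite the $p$-derivative (resp.\ $q$-derivative) of the homotopy integral as $\int_0^1 \frac{d}{d\lambda}\big(\lambda X_{q,i}(\lambda q,\lambda p)\big)\,d\lambda$ (resp.\ $-\int_0^1 \frac{d}{d\lambda}\big(\lambda X_{p,i}(\lambda q,\lambda p)\big)\,d\lambda$), and the index bookkeeping you present checks out.

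If one wants a point of comparison, the paper's proof of the time-scale analogue (Theorem~\ref{helmholtztimescale}) takes the same route in compressed form: it asserts that the Helmholtz conditions force $DH(q,p)(u,v)=\int_0^1 \partial_\lambda\big(v\cdot X_q-u\cdot X_p\big)\,d\lambda=v\cdot X_q(q,p)-u\cdot X_p(q,p)$ without spelling out the index manipulations, and then feeds this into the Fr\'echet derivative of $\mathcal{L}_{H,\Delta}$. Your write-up is more explicit on precisely the step the paper leaves to the reader, and it also treats the necessity direction, which the paper's proof of the time-scale version omits.
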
 

\begin{remark}
The Classical Hamiltonian Helmholtz conditions are also the conditions to which the differential form associate to the vector field $X$ with respect to the symplectic scalar product is closed.
\end{remark}

\section{Time scale Helmholtz Hamiltonian problem}

\subsection{Generalities and notations}

\begin{definition}
We define the $L^2-\DD$ scalar product defined for $f,g \in \Crd(\T,\R^{d})$ by 

\begin{equation*}
<f,g>_{L^2,\DD}=\int_{a}^{b}<f(t),g(t)>\DD t \ .
\end{equation*}

and also the $L^2-\DD$ symplectic scalar product defined for $f,g \in C^0_{rd}(\T,\R^{2d})$ by 

\begin{equation*}
<f,g>_{L^2,\DD,J}=<f,J\cdot g>_{L^2,\DD}
\end{equation*}
\end{definition}

\begin{definition}
Let $\fonctionsansdef{A}{\Crd(\T,\R^{2d})}{\Crd(\T,\R^{2d})}$. We define the adjoint $A^*_J$ of $A$ with respect to $<\cdot,\cdot>_{L^2,\DD,J}$ by

\begin{equation*}
<A \cdot f, g>_{L^2,\DD,J} = <A^{*}_J \cdot g, f>_{L^2,\DD,J} \ .
\end{equation*}
\end{definition}

Let $\CDNI(\T)$ denote the set $\CDN$ and $\CDNz(\T)=\{ w\in \CDNI(\T) , w(a)=w(b)=0 \}$.

\subsection{Time scale Hamiltonian}

We consider the $\DD$-embedding of $\mathcal{L}_H$, $\mathcal{L}_{H,\DD}$ defined by

\begin{equation*}
\mathcal{L}_{H,\DD}(q,p) = \int_{a}^{b}L_H(q(t),p(t),q^\DD(t),p^\DD(t)) \Delta t
\end{equation*}

for all $(q,p)\in \CDNI(\T,\R^d)$. \\

We assume that $\rho$ is $\DD$-differentiable on $\TK$ and $\sigma$ is $\nabla$-differentiable on $\Tk$. \\

\begin{definition}[Time scale Hamiltonian]
A time scale Hamiltonian is a function $H : \R^d \times \R^d \rightarrow \R$ such that for $(q,p)\in \CDNI(\T,\R^d)$ we have the time evolution of $(q,p)$ given by the time scale Hamilton's equations under the \textit{derivative form}

\begin{align*}
(\star 1) \left\{
\begin{array}{r l}
q^\DD(t)&=\frac{\partial H(q(t),p(t)}{\partial p} \\
\rho^\DD(t)p^\nabla(t)&=-\frac{\partial H(q(t),p(t))}{\partial q}
\end{array}
\right. \quad \text{for all} \ t\in \TKk
\end{align*}

or under the \textit{integral form}

\begin{align*}
(\star 2) \left\{
\begin{array}{r l}
q(\sigma(t))&=\di \int_{a}^{\sigma(t)} \frac{\partial H}{\partial p}(q(\tau),p(\tau))\DD\tau + C_q \\
p(t)&=\di \int_{a}^{\sigma(t)}-\frac{\partial H}{\partial q}(q(\tau),p(\tau))\DD\tau + C_p
\end{array}
\right. \quad \text{for all} \ t\in \TK,
\end{align*}

 where $C_q$ and $C_p$ are constants. Moreover, the derivative form and the integral form are equivalent.

\end{definition}

\begin{theorem}
The critical points $(q,p)\in \CDNI(\T,\R^d)$ of the functional $\mathcal{L}_{H,\DD}$ satisfy the time scale Hamilton's equations $(\star 1)$ or equivalently $(\star2)$.
\end{theorem}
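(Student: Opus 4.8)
The plan is to compute the Fr\'echet derivative of $\mathcal{L}_{H,\DD}$ and reduce it to a variational identity. Expanding the embedded Lagrangian $L_H(x,y,v,w)=\langle y,v\rangle-H(x,y)$,
\begin{equation*}
\mathcal{L}_{H,\DD}(q,p)=\int_a^b\Bigl[\langle p(t),q^\DD(t)\rangle-H(q(t),p(t))\Bigr]\DD t ,
\end{equation*}
so that for a variation $(w_q,w_p)\in\CDNz(\T)$, i.e. $w_q\in\Cdrdz(\T)$ and $w_p\in\Cnldz(\T)$ (in particular $w_q(a)=w_q(b)=w_p(a)=w_p(b)=0$), the chain rule and the $C^1$ regularity of $H$ give
\begin{equation*}
D\mathcal{L}_{H,\DD}(q,p)(w_q,w_p)=\int_a^b\Bigl[\langle p(t),w_q^\DD(t)\rangle+\langle q^\DD(t),w_p(t)\rangle-\tfrac{\partial H}{\partial q}\cdot w_q(t)-\tfrac{\partial H}{\partial p}\cdot w_p(t)\Bigr]\DD t .
\end{equation*}
Since the variation space is the product $\Cdrdz(\T)\times\Cnldz(\T)$, the $w_q$- and $w_p$-directions can be tested independently.

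The crucial step is to transfer the $\DD$-derivative off $w_q$ in the term $\int_a^b\langle p(t),w_q^\DD(t)\rangle\DD t$. Here the standing assumption that $\rho$ is $\DD$-differentiable on $\TK$, together with the facts that $p\in\Cnld(\T)$ is $\nabla$-differentiable and $w_q\in\Cdrd(\T)$ is $\DD$-differentiable, makes the integration by parts formula (ii) applicable; since $w_q$ vanishes at $a$ and $b$ the boundary term $[p(\rho(t))w_q(t)]_{t=a}^{t=b}$ disappears and
\begin{equation*}
\int_a^b\langle p(t),w_q^\DD(t)\rangle\DD t=-\int_a^b\rho^\DD(t)\,\langle p^\nabla(t),w_q(t)\rangle\DD t .
\end{equation*}
Substituting this back, the critical point condition $D\mathcal{L}_{H,\DD}(q,p)(w_q,w_p)=0$ for all admissible $(w_q,w_p)$ reads
\begin{equation*}
\int_a^b\Bigl\langle q^\DD(t)-\tfrac{\partial H}{\partial p}(q(t),p(t)),\,w_p(t)\Bigr\rangle\DD t=\int_a^b\Bigl\langle \rho^\DD(t)p^\nabla(t)+\tfrac{\partial H}{\partial q}(q(t),p(t)),\,w_q(t)\Bigr\rangle\DD t .
\end{equation*}

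To conclude I would take first $w_q=0$ with $w_p$ ranging over $\Cnldz(\T)$ and then $w_p=0$ with $w_q$ ranging over $\Cdrdz(\T)$, and apply a Dubois--Reymond-type argument in each direction to deduce
\begin{equation*}
q^\DD(t)=\tfrac{\partial H}{\partial p}(q(t),p(t)) \qquad\text{and}\qquad \rho^\DD(t)p^\nabla(t)=-\tfrac{\partial H}{\partial q}(q(t),p(t))
\end{equation*}
for every $t\in\TKk$, which is precisely $(\star 1)$; the equivalence with the integral form $(\star 2)$ is the one already recorded in the definition of a time scale Hamiltonian. The main obstacle is to make this last step rigorous within the spaces $\Cdrd(\T)$ and $\Cnld(\T)$: the weak Dubois--Reymond lemma is stated with test functions ranging over all of $\Crd$, whereas the admissible variations here are confined to the smaller classes $\Cdrdz(\T)$ and $\Cnldz(\T)$, so one needs a version of the lemma (or a direct argument) valid for these restricted test classes, checking in particular that the auxiliary test functions of the form $r$ times the coefficient, with $r(\tau)=(\tau-a)^2(\tau-b)^2$, that appear in the proof of the weak form stay inside $\Cdrdz(\T)$, resp. $\Cnldz(\T)$, and that the coefficients $q^\DD-\frac{\partial H}{\partial p}(q,p)$ and $\rho^\DD p^\nabla+\frac{\partial H}{\partial q}(q,p)$ carry the required rd-/ld-continuity. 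This is where the differentiability hypotheses on $\rho$ and $\sigma$ — and the observation that on $\TKk$ only the cases $t\in\LS\cap\RS$ and $t\in\LD\cap\RD$ occur (cf. the proof of Proposition \ref{propinv}) — are used.
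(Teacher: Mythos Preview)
Your derivation of $(\star 1)$ follows the paper exactly: compute the Fr\'echet derivative, apply integration by parts formula (ii) to the term $\langle p, w_q^\DD\rangle$ (using $w_q(a)=w_q(b)=0$ to kill the boundary term), separate the $w_q$- and $w_p$-directions, and invoke the weak Dubois--Reymond lemma. Where you diverge is in the handling of $(\star 2)$. The paper does not simply cite the equivalence announced in the definition; instead it derives $(\star 2)$ a second time directly from the same variational identity, now using integration by parts formula (i) together with the \emph{strong} Dubois--Reymond lemma, and then proves $(\star 1)\Leftrightarrow(\star 2)$ explicitly via Propositions~\ref{propinv} and~\ref{propbourdin} and the $\nabla$-analogue of Proposition~\ref{rappeldelta2}. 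Since the equivalence asserted in the definition is in fact substantiated only by this argument, deferring to the definition is somewhat circular; you should supply that computation.

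Your worry about whether the weak Dubois--Reymond lemma applies when the test functions are restricted to $\Cdrdz$ or $\Cnldz$ rather than all of $\Crd$ is legitimate, but the paper does not address it either: it simply invokes the lemma without comment. So on that technical point your proposal is no less rigorous than the paper's own proof. Your closing remark that the dichotomy $t\in\LS\cap\RS$ versus $t\in\LD\cap\RD$ is what resolves this issue is not quite right, though; that dichotomy is used in the paper only for Proposition~\ref{propinv} and hence for the equivalence $(\star 1)\Leftrightarrow(\star 2)$, not for the Dubois--Reymond step.
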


\begin{proof}

Let $(u,v) \in \CDNz(\T,\R^d)$. The Fr\'echet derivative of $\mathcal{L}_{H,\DD}$ at $(q,p)$ along $(u,v)$ is given by
\begin{equation*}
D\mathcal{L}_{H,\Delta}(q,p)(u,v)=\di \int_{a}^{b} \left[ p(t) \cdot u^\DD(t)+v(t)\cdot q^\DD(t)- DH\left(q(t),p(t)\right)(u(t),v(t))\right]\DD t.
\end{equation*}
where the Fr\'echet derivative of $H$ at $(q,p)$ along $(u,v)$ is given by

\begin{equation*}
DH(q,p)(u,v)=\frac{\partial H(q,p)}{\partial q}\cdot u + \frac{\partial H(q,p)}{\partial p}\cdot v \ .
\end{equation*}
First we prove the critical points satisfy the Hamilton's equations under the derivative form.\\

Using the integration by parts formula $(ii)$ of Proposition \eqref{int:par:delta2} and using the fact that $u$ vanish at $t=a$ and $t=b$ we obtain

\begin{equation*}
D\mathcal{L}_{H,\Delta}(q,p)(u,v)=\di \int_{a}^{b} \left[ -\rho^\DD(t) \NN p(t) \cdot u(t)+v(t)\cdot q^\DD(t)- DH\left(q(t),p(t)\right)(u(t),v(t))\right]\DD t.
\end{equation*}

Using the expression $DH(q,p)(u,v)$ we obtain

\begin{align*}
& D\mathcal{L}_{H,\Delta}(q,p)(u,v) \\
&=\di \int_{a}^{b} \left[ -u(t)\cdot \left(\rho^\DD(t) \NN p(t)+\frac{\partial H(q(t),p(t))}{\partial q}\right)+v(t)\cdot\left(q^\DD(t)-\frac{\partial H(q(t),p(t)}{\partial p}\right)\right]\DD t.
\end{align*}

By definition, if $(q,p)$ is a critical point of $D\mathcal{L}_{H,\Delta}$ then we have

\begin{equation*}
D\mathcal{L}_{H,\Delta}(q,p)(u,v)=0
\end{equation*}

and then using the weak form of the Dubois--Reymond Lemma we obtain the time scale Hamilton's equations under the derivative form. \\

Second using the same strategy with the integration by parts formula $(i)$ of Proposition \eqref{int:par:delta1} and using the strong form of the Dubois--Reymond Lemma we obtain the time scale Hamilton's equations under the integral form. \\

Equivalence between $(\star 1)$ and $(\star 2)$ is due to the $\DD$-differentiability of $\rho$ on $\TK$ and the $\nabla$-differentiability of $\sigma$ on $\Tk$. Indeed, using Proposition \eqref{propinv} we obtain for all $t\in \TKk$

\begin{equation*}
(\star 1) \Longleftrightarrow 
\left\{
\begin{array}{r l}
\sigma^\NN(t) q^\DD(t)&=\di\sigma^\NN(t)\frac{\partial H(q(t),p(t))}{\partial p} \\
p^\nabla(t)&=\di-\sigma^\NN(t)\frac{\partial H(q(t),p(t))}{\partial q}
\end{array}
\right. \ .
\end{equation*}

Then using Proposition \eqref{propbourdin} we obtain for all $t\in \TKk$

\begin{align*}
\left\{
\begin{array}{r l}
\sigma^\NN(t) q^\DD(t)&=\di\sigma^\NN(t)\frac{\partial H(q(t),p(t))}{\partial p} \\
p^\nabla(t)&=\di-\sigma^\NN(t)\frac{\partial H(q(t),p(t))}{\partial q}
\end{array}
\right. \ \Longleftrightarrow \
\left\{
\begin{array}{r l}
\left[q(\sigma(t))\right]^\NN&=\di \left[\int_{a}^{\sigma(t)}\frac{\partial H(q(\tau),p(\tau))}{\partial p}\DD\tau\right]^\NN \\
p^\nabla(t)&=\di \left[\int_{a}^{\sigma(t)} -\frac{\partial H(q(\tau),p(\tau))}{\partial q}\DD\tau\right]^\NN
\end{array}
\right. \ .
\end{align*}

Using the $\NN$ version of Proposition \eqref{rappeldelta2} we obtain $(\star 1) \Longleftrightarrow (\star 2)$.

\end{proof}

\subsection{Time scale Helmholtz Hamiltonian Theorem}

We consider the general system of time scale equations associate to $X=\begin{pmatrix} X_q \\ X_p \end{pmatrix}$ defined as

\begin{align*}
\left(\star\right) \ \left\{
\begin{array}{r l}
q^\DD(t)&=X_q(q(t),p(t)) \\
\rho^\DD(t)p^\nabla(t)&=X_p(q(t),p(t))
\end{array}
\right.
\end{align*}

for all $t\in \TKk$. It defines an operator $O^\T_X$ as follows :

\begin{equation*}
\fonction{O^\T_X}{\CDNI(\T,\R^{2d})}{C(\TKk,\R^{2d})}{(q,p)}{\begin{pmatrix} q^\DD - X_q(q,p) \\ \rho^\DD p^\nabla - X_p(q,p) \end{pmatrix} \ .}
\end{equation*}
A straightforward computation leads to :

\begin{proposition}

Let $(u,v) \in \CDNz(\T,\R^d)$. The Fr\'echet derivative $DO^\T_X(q,p)$ is given by

\begin{equation*}
DO^\T_X(q,p)(u,v) = \begin{pmatrix} u^\DD -\frac{\partial X_q}{\partial q} \cdot u -\frac{\partial X_q }{\partial p}\cdot v \\ \rho^\DD v^\nabla -\frac{\partial X_p}{\partial q} \cdot u -\frac{\partial X_p}{\partial p} \cdot v \end{pmatrix}
\end{equation*}

and its adjoint ${DO^{\T,*}_{X,J}}(q,p)$ with respect to the $L^2-\DD$ symplectic scalar product is given by

\begin{equation*}
{DO^{\T,*}_{X,J}}(q,p)(u,v) = \begin{pmatrix} u^\DD +\left(\frac{\partial X_p}{\partial p}\right)^T \cdot u-\left(\frac{\partial X_q}{\partial p}\right)^T\cdot v \\ \rho^\DD v^\nabla -\left(\frac{\partial X_p}{\partial q}\right)^T \cdot u +\left(\frac{\partial X_q}{\partial q}\right)^T \cdot v \end{pmatrix}.
\end{equation*}

\end{proposition}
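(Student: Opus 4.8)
The plan is to establish the two formulas by direct computation, which is what ``straightforward'' in the statement anticipates. For the Fr\'echet derivative, I would use that the maps $(q,p)\mapsto q^\DD$ and $(q,p)\mapsto\rho^\DD p^\nabla$ are linear and continuous, hence equal to their own differentials, while by the chain rule the differential of $(q,p)\mapsto X_q(q(t),p(t))$ along $(u,v)$ is $\frac{\partial X_q}{\partial q}(q(t),p(t))\cdot u(t)+\frac{\partial X_q}{\partial p}(q(t),p(t))\cdot v(t)$, and likewise with $X_p$ in place of $X_q$. Assembling the two components of $O^\T_X$ then gives the stated expression for $DO^\T_X(q,p)(u,v)$; nothing here is delicate.

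For the adjoint I would pair $DO^\T_X(q,p)$ applied to an arbitrary second variation $(\phi,\psi)\in\CDNz(\T,\R^d)$ against $(u,v)$ in the $L^2$-$\DD$ symplectic product, i.e.\ compute $<DO^\T_X(q,p)(\phi,\psi),(u,v)>_{L^2,\DD,J}$, and rewrite it as $<DO^{\T,*}_{X,J}(q,p)(u,v),(\phi,\psi)>_{L^2,\DD,J}$, which is exactly the defining relation of the adjoint. Unfolding the pairing with the explicit $J$ gives $<f,g>_{L^2,\DD,J}=\int_a^b\big(<f_1(t),g_2(t)>-<f_2(t),g_1(t)>\big)\DD t$, so the integrand splits into four ``algebraic'' terms carrying the partials $\frac{\partial X_q}{\partial q},\frac{\partial X_q}{\partial p},\frac{\partial X_p}{\partial q},\frac{\partial X_p}{\partial p}$ together with two ``differential'' terms, $<\phi^\DD,v>$ and $-<\rho^\DD\psi^\nabla,u>$. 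For the algebraic terms the pointwise identity $<Mw_1,w_2>=<w_1,M^Tw_2>$ transposes each matrix onto the opposite variation, and after regrouping the integrand into the form $<(\cdot),\psi>-<(\cdot),\phi>$ one reads off precisely the four algebraic terms occurring in $<DO^{\T,*}_{X,J}(q,p)(u,v),(\phi,\psi)>_{L^2,\DD,J}$.

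The only step that really needs care is the two differential terms, and the crucial point is to integrate by parts with formula $(ii)$ of the integration-by-parts Proposition rather than with formula $(i)$. Applying $(ii)$ with $f=v$ (which is $\nabla$-differentiable since $v\in\Cnld(\T)$), $g=\phi$ ($\DD$-differentiable since $\phi\in\Cdrd(\T)$), and using that $\rho$ is $\DD$-differentiable on $\TK$ (the standing hypothesis of this section), I obtain $\int_a^b<\phi^\DD,v>\DD t=\big[<v(\rho(t)),\phi(t)>\big]_{t=a}^{t=b}-\int_a^b<\rho^\DD v^\nabla,\phi>\DD t$, whose boundary term vanishes because $\phi(a)=\phi(b)=0$; symmetrically $\int_a^b<u^\DD,\psi>\DD t=-\int_a^b<\rho^\DD\psi^\nabla,u>\DD t$. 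Substituting both relations, the differential parts of $<DO^\T_X(q,p)(\phi,\psi),(u,v)>_{L^2,\DD,J}$ and of $<DO^{\T,*}_{X,J}(q,p)(u,v),(\phi,\psi)>_{L^2,\DD,J}$ both reduce to $-\int_a^b\big(<\rho^\DD v^\nabla,\phi>+<\rho^\DD\psi^\nabla,u>\big)\DD t$, so the two pairings coincide, and identifying the two components of the resulting operator gives the claimed formula for $DO^{\T,*}_{X,J}(q,p)$. I expect the main obstacle to be bookkeeping: tracking the signs coming from $J$ and from the two integrations by parts, and choosing the integration-by-parts formula so that the surviving derivative has the shape $\rho^\DD(\cdot)^\nabla$, matching the second line of $O^\T_X$, instead of a $\DD$-derivative of a $\sigma$-shifted function, which formula $(i)$ would have forced.
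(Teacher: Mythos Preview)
Your proposal is correct and is exactly the ``straightforward computation'' the paper alludes to but does not write out: the paper gives no proof beyond that phrase, and your argument---linearity of the derivative parts, chain rule for the $X_q,X_p$ parts, then transposing the matrix terms and applying integration-by-parts formula $(ii)$ with vanishing boundary terms to move $\phi^\DD\leftrightarrow\rho^\DD v^\nabla$ and $u^\DD\leftrightarrow\rho^\DD\psi^\nabla$---fills in precisely the details one expects. Your remark that formula $(ii)$ rather than $(i)$ is the correct choice, so as to land on $\rho^\DD(\cdot)^\nabla$ rather than a shifted $\DD$-derivative, is the one nontrivial observation in the computation.
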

By identification we obtain :

\begin{lemma}[Time scale Hamiltonian Helmholtz conditions]
The operator $O^\T_X$ has its Fr\'echet derivative self-adjoint at $(q,p)\in \CDNI(\T,\R^d)$ if and only if the following conditions are satisfied over $\TKk$ :

\begin{align*}
& \frac{\partial X_q(q,p)}{\partial q} + \left(\frac{\partial X_p(q,p)}{\partial p} \right)^T = 0 \\
& \frac{\partial X_q(q,p)}{\partial p} \ \text{and} \ \frac{\partial X_p(q,p)}{\partial q} \ \text{are symmetric}.
\end{align*}

\end{lemma}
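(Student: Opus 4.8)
The plan is to prove the lemma by directly computing the two operators $DO^\T_X(q,p)$ and ${DO^{\T,*}_{X,J}}(q,p)$ from the Proposition that immediately precedes the statement, and then equating them coefficient by coefficient. Since the Proposition already supplies closed‐form expressions for both, the core of the argument is an \emph{identification}: the operator $O^\T_X$ has self-adjoint Fréchet derivative at $(q,p)$ precisely when $DO^\T_X(q,p)(u,v) = {DO^{\T,*}_{X,J}}(q,p)(u,v)$ for all $(u,v)\in\CDNz(\T,\R^d)$, and the first step is to observe that this functional identity is equivalent to the pointwise equality of the corresponding coefficient matrices on $\TKk$, because the displacements $(u,v)$ are arbitrary elements of $\CDNz$ and the derivative terms $u^\DD$ and $\rho^\DD v^\nabla$ already agree in both expressions.

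Next I would subtract the two expressions. The $u^\DD$ terms in the first components cancel, as do the $\rho^\DD v^\nabla$ terms in the second components, leaving a linear expression in $u$ and $v$ whose vanishing for all admissible $(u,v)$ forces each coefficient matrix to vanish. Matching the coefficient of $u$ in the first component gives $-\frac{\partial X_q}{\partial q} = \left(\frac{\partial X_p}{\partial p}\right)^T$, i.e. $\frac{\partial X_q(q,p)}{\partial q} + \left(\frac{\partial X_p(q,p)}{\partial p}\right)^T = 0$. Matching the coefficient of $v$ in the first component gives $-\frac{\partial X_q}{\partial p} = -\left(\frac{\partial X_q}{\partial p}\right)^T$, i.e. $\frac{\partial X_q(q,p)}{\partial p}$ is symmetric. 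Matching the coefficient of $u$ in the second component gives $-\frac{\partial X_p}{\partial q} = -\left(\frac{\partial X_p}{\partial q}\right)^T$, i.e. $\frac{\partial X_p(q,p)}{\partial q}$ is symmetric. Finally, matching the coefficient of $v$ in the second component gives $-\frac{\partial X_p}{\partial p} = \left(\frac{\partial X_q}{\partial q}\right)^T$, which is just the transpose of the first condition and therefore adds nothing new. Hence the four matrix identities collapse to exactly the three stated Helmholtz conditions, and conversely, if those three conditions hold on $\TKk$, reading the computation backwards shows that the two operator expressions coincide, so $DO^\T_X(q,p)$ is self-adjoint.

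The one point that needs a little care — and where I expect the only genuine subtlety — is the passage from "the two operators agree as functionals of $(u,v)$" to "the coefficient matrices agree pointwise on $\TKk$." This is where one must invoke the fundamental-lemma-type argument: the difference $DO^\T_X(q,p)(u,v) - {DO^{\T,*}_{X,J}}(q,p)(u,v)$ is, after the derivative terms cancel, a continuous function on $\TKk$ of the form $M(t)u(t) + N(t)v(t)$ for matrix-valued coefficients built from the partial derivatives of $X$; choosing $v\equiv 0$ and letting $u$ range over $\Cdrdz(\T,\R^d)$ (and symmetrically for $v$), the weak form of the Dubois--Reymond lemma — or simply evaluating against suitable bump-type variations as in its proof — forces $M\equiv 0$ and $N\equiv 0$ on $\TKk$. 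I would state this reduction explicitly, noting that the derivation of the adjoint formula in the preceding Proposition already relies on the integration-by-parts formulas (i) and (ii) together with Proposition~\ref{propinv}, so no new analytic input beyond what is established above is required. The remainder is the purely algebraic bookkeeping of the four coefficient comparisons, which I would present compactly rather than in full detail.
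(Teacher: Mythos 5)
Your proposal is correct and takes essentially the same approach as the paper: the paper's entire proof is the phrase ``By identification we obtain,'' i.e., equating the explicit expressions for $DO^\T_X(q,p)$ and ${DO^{\T,*}_{X,J}}(q,p)$ from the preceding Proposition coefficient by coefficient, exactly as you do. Your extra care in justifying the passage from equality of the operators to pointwise equality of the coefficient matrices on $\TKk$, and your remark that the fourth identity is just the transpose of the first, only make explicit what the paper leaves implicit.
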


Now we can state the main result of this paper :

\begin{theorem}[Helmholtz theorem for Hamiltonian systems on time scales]
\label{helmholtztimescale}
The vector field X is a time scale Hamiltonian if and only if the operator $O^\T_X$ associated has his Fr\'echet derivative self-adjoint with respect to the $L^2-\DD$ symplectic scalar product. \\
	
In this case the Hamiltonian associate to $X$ is given by 
\begin{equation*}
H(q,p)=\int_{0}^{1}\left[p \cdot X_q(\lambda q, \lambda p) - q\cdot X_p(\lambda q, \lambda p) \right]d\lambda
\end{equation*}
	
\end{theorem}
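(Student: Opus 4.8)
The plan is to prove the two implications separately, mirroring the structure of the classical Helmholtz Hamiltonian theorem (Theorem from Santilli) but carrying the time scale machinery developed above.

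\textbf{The easy direction.} First I would show that if $X$ is a time scale Hamiltonian, then $O^\T_X$ has self-adjoint Fr\'echet derivative. If $X$ is a time scale Hamiltonian with Hamiltonian $H$, then by definition $X_q = \partial H/\partial p$ and $X_p = -\partial H/\partial q$. Substituting these into the time scale Hamiltonian Helmholtz conditions of the Lemma above, the first condition becomes $\partial^2 H/\partial p\,\partial q - (\partial^2 H/\partial q\,\partial p)^T = 0$, which holds by the symmetry of second partial derivatives (Schwarz), and the two symmetry conditions become the symmetry of the Hessian blocks $\partial^2 H/\partial p^2$ and $\partial^2 H/\partial q^2$, again true by Schwarz. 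Hence the Helmholtz conditions hold over $\TKk$, and by the preceding Lemma $DO^\T_X(q,p)$ is self-adjoint with respect to $\langle\cdot,\cdot\rangle_{L^2,\DD,J}$.

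\textbf{The hard direction.} Conversely, suppose $DO^\T_X$ is self-adjoint, so by the Lemma the three Helmholtz conditions hold on $\TKk$. I would define $H$ by the stated formula $H(q,p)=\int_0^1\bigl[p\cdot X_q(\lambda q,\lambda p) - q\cdot X_p(\lambda q,\lambda p)\bigr]\,d\lambda$ and verify directly that $\partial H/\partial p = X_q$ and $\partial H/\partial q = -X_p$. Differentiating under the integral sign with respect to $p$ (say the $j$-th component), one picks up one term from the explicit $p$ in front of $X_q$ and $\lambda$-scaled terms from the chain rule applied to $X_q(\lambda q,\lambda p)$ and $X_p(\lambda q,\lambda p)$; the Helmholtz conditions are exactly what is needed to combine these into $\frac{d}{d\lambda}\bigl[\lambda X_q(\lambda q,\lambda p)\bigr]$, so that the $\lambda$-integral telescopes to $X_q(q,p)$. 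This is the same integrating-factor computation as in the classical proof in Santilli's book, and it is purely pointwise in $t$ — the variable $t$ plays no role here since $X$ depends only on $(q,p)$. The symmetry condition on $\partial X_q/\partial p$ ensures the "$p$ in front" term and the chain-rule term on $X_q$ assemble correctly, while the first Helmholtz condition $\partial X_q/\partial q = -(\partial X_p/\partial p)^T$ converts the chain-rule contribution from $-q\cdot X_p$ into the right shape; the computation for $\partial H/\partial q = -X_p$ is symmetric, using the symmetry of $\partial X_p/\partial q$. Thus $X_q = \partial H/\partial p$ and $X_p = -\partial H/\partial q$, which is precisely the statement that the system $(\star)$ coincides with $(\star 1)$ for this $H$, i.e. $X$ is a time scale Hamiltonian.

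\textbf{Main obstacle.} The genuine work is entirely in the converse, specifically in checking that the Helmholtz conditions make the $\lambda$-integrand an exact $\lambda$-derivative; this is a careful but routine multivariable calculus verification. One point that deserves attention is that the Helmholtz conditions are only assumed to hold on $\TKk$, whereas the candidate $H$ is a function on $\R^d\times\R^d$ with no reference to $t$; since $\TKk\neq\emptyset$ (by the Remark following the definition of $\TKk$) and the conditions are conditions on the partial derivatives of $X_q,X_p$ as functions on $\R^{2d}$ evaluated along $(q(t),p(t))$, for the identities $\partial H/\partial p = X_q$ and $\partial H/\partial q = -X_p$ to hold as identities of functions on $\R^{2d}$ one wants the conditions to hold identically; I would note that self-adjointness of $DO^\T_X(q,p)$ for all $(q,p)\in\CDNI(\T,\R^d)$ forces the Helmholtz identities to hold at every point of $\R^{2d}$, since any point can be realized as $(q(t),p(t))$ for a suitable admissible $(q,p)$ and $t\in\TKk$. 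With that observation in place, the pointwise calculus argument goes through and completes the proof.
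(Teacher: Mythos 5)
Your proposal is correct and follows essentially the same route as the paper: both rest on the time scale Hamiltonian Helmholtz conditions lemma plus the classical Santilli computation in which those conditions make the $\lambda$-integrand of $H$ an exact $\lambda$-derivative, so that the stated formula for $H$ yields $\partial H/\partial p = X_q$ and $\partial H/\partial q = -X_p$. If anything your write-up is more complete than the paper's: you supply the forward implication and the trajectory-versus-pointwise issue explicitly, and you conclude directly from the gradient identities rather than detouring, as the paper does, through the critical points of $\mathcal{L}_{H,\Delta}$, integration by parts, and the weak Dubois--Reymond lemma.
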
 

\begin{proof}

Let $(u,v) \in \CDNz(\T,\R^d)$. The Fr\'echet derivative of $\mathcal{L}_{H,\DD}$ at $(q,p)$ along $(u,v)$ is given by
\begin{equation*}
D\mathcal{L}_{H,\Delta}(q,p)(u,v)=\di \int_{a}^{b} \left[ p(t) \cdot u^\DD(t)+v(t)\cdot q^\DD(t)- DH\left(q(t),p(t)\right)(u(t),v(t))\right]\DD t.
\end{equation*}
The time scale Hamiltonian Helmholtz conditions implies that the Fr\'echet derivative of $H$ at $(q,p)$ along $(u,v)$ is given by

\begin{equation*}
DH(q,p)(u,v)=\int_{0}^{1} \frac{\partial}{\partial \lambda} \left(v\cdot X_q(q,p)-u\cdot X_p(q,p)\right) d\lambda,
\end{equation*}
which leads to
\begin{equation*}
DH(q,p)(u,v)=v\cdot X_q(q,p)-u\cdot X_p(q,p).
\end{equation*}

By definition, if $(q,p)$ is a critical point of $D\mathcal{L}_{H,\Delta}$ then we have
\begin{equation*}
D\mathcal{L}_{H,\Delta}(q,p)(u,v)=0
\end{equation*}
and then using integration by parts formula $(ii)$ with the weak form of the Dubois--Reymond lemma concludes the proof.
\end{proof}

\section{Conclusion and prospects}

We proved a result on first order time scale equations which allows us to find the existence of a Hamiltonian structure associate and in the affirmative case to give the Hamiltonian. Our result recover both the discrete and classical case but it allows the mixing of both of them as the time scale calculus was created with such motivation \cite{hilger}. The Hamiltonian Helmholtz problem was easier to prove as contrary to the Lagrangian case. Indeed, there is no mixing of $\DD$ and $\NN$ derivative such as $\DD \circ \NN$ or $\NN \circ \DD$ which happen in the discrete case \cite{bourdin-cresson}. \\

The further extension of the Hamiltonian Helmholtz problem is to consider the derivative as combinations of $\DD$ and $\NN$ such that $\diamond=\frac{\DD+\NN}{2}$ which is the diamond integral for which motivations and definitions can be found in \cite{rogers}, \cite{dacruz} and references therein. \\

Another further extension of this result concern the stochastic calculus and more precisely the stochastic calculus on time scales defined in \cite{sanyal} and \cite{bohnersto}. The work is to define a natural notion of stochastic Hamiltonian on time scales and then to give the stochastic version of Theorem \ref{helmholtztimescale}. This extension is a work in progress and will be the subject of a future paper.

\bibliographystyle{plain}


\end{document}